\documentclass{amsart} 

\usepackage{amsmath,amsthm}     
\usepackage{amssymb,empheq}            
\usepackage{euscript}           
\usepackage{graphicx,enumerate,calc,lscape,color}
\definecolor{red}{rgb}{1,0,0}
\usepackage[matrix,arrow,curve,frame]{xy}    
\usepackage{hyperref}

\xymatrixcolsep{1.9pc}                          
\xymatrixrowsep{1.9pc}
\newdir{ >}{{}*!/-5pt/\dir{>}}                  

\raggedbottom


\tolerance=1500

\setcounter{tocdepth}{1}
\setcounter{secnumdepth}{2}



\newtheorem{thm}[subsection]{Theorem}
\newtheorem{defn}[subsection]{Definition}
\newtheorem{prop}[subsection]{Proposition}

\newtheorem{cor}[subsection]{Corollary}
\newtheorem{lemma}[subsection]{Lemma}

\theoremstyle{definition}  

\newtheorem{remark}[subsection]{Remark}

  {\end{list}}

\newcommand{\dfn}{\textbf} 

\newcommand{\mdfn}[1]{\dfn{\mathversion{bold}#1}} 


\newcommand{\Smash}             {\wedge}

\newcommand{\tens}              {\otimes}               
\newcommand{\iso}               {\cong}

\newcommand{\cat}{\EuScript}    

\newcommand{\cF}{{\cat F}}

\newcommand{\cP}{{\mathcal P}}

\newcommand{\cW}{{\cat W}}

\newcommand{\Ab}{{\cat Ab}}


\newcommand{\field}[1]  {\mathbb #1} 

\newcommand{\N}         {\field N}

\newcommand{\Z}         {\field Z}

\DeclareMathOperator*{\colim}{colim}

\DeclareMathOperator{\Spec}{Spec}
\DeclareMathOperator{\Hom}{Hom}
\DeclareMathOperator{\uHom}{\underline{Hom}}

\DeclareMathOperator{\id}{id}

\newcommand{\ra}{\rightarrow}                   
\newcommand{\lra}{\longrightarrow}              
\newcommand{\la}{\leftarrow}                    
\newcommand{\llra}[1]{\stackrel{#1}{\lra}}      



\newcommand{\inc}{\hookrightarrow}              


\newcommand{\blank}{-}                          

\newcommand{\und}{\underline}


\newcommand{\rea}[1]{|{#1}|}             

\newcommand{\ceck}[1]{\Cech(#1)}         
\newcommand{\oceck}[1]{\Cech^{o}(#1)}    
\newcommand{\oreal}[1]{\rea{\oceck{U}}}  
\newcommand{\creal}[1]{\rea{\ceck{U}}}   

\newcommand{\Cech}{\check{C}}

\numberwithin{equation}{subsection}


\newenvironment{myequation}
  {\addtocounter{subsection}{1}\begin{eqnarray}}
  {\end{eqnarray}$\!\!$}

\newcommand{\nosee}[1]{}

\newcommand{\DGA}{{\mathcal{D}\mathcal{G}\mathcal{A}}}
\newcommand{\on}{\ \text{on}\ }
\newcommand{\sbt}{{\scriptscriptstyle{\bullet}}}
\newcommand{\us}{{\underline{s}}}
\newcommand{\ut}{{\underline{t}}}

\begin{document}

\title{Grothendieck groups of complexes with null-homotopies}

\author{Daniel Dugger}
\address{Department of Mathematics\\ University of Oregon\\ Eugene, OR
97403} 

\email{ddugger@uoregon.edu}


\maketitle

\tableofcontents

\section{Introduction}
This paper is an addendum to \cite{FH}.  We reprove the main theorem
of that paper by using Grothendieck groups of modules over certain
DGAs.  While the changes to the argument in \cite{FH} are largely
cosmetic, our approach shortens the proof from 25 pages down to 10 and
greatly clarifies the overall structure.  The authors of
\cite{FH} describe their proof as both ``tedious'' and ``cumbersome'',
whereas the approach given here is neither of these.

\subsection{Statement of the main results}
Let $R$ be a commutative ring, and let $S\subseteq R$ be a
multiplicative system.  Let $\cP(R)$ denote the category of bounded chain
complexes $P_\sbt$ such that $P_i=0$ for $i<0$ and where each $P_i$ is
a finitely-generated projective.  
 Define the relative $K$-group $K(R\on S)$ as
follows:

\begin{defn}
\label{de:RonS}
Let $\cF(R\on S)$ be the free abelian group on the isomorphism classes
of objects $P_\sbt$ in $\cP(R)$ having the property that
 $S^{-1}P_\sbt$ is exact.  Let
$K(R\on S)$ be the quotient of $\cF(R\on S)$ by the following relations:
\begin{enumerate}[(1)]
\item $[P_\sbt]=[P'_\sbt]+[P''_\sbt]$ whenever $0\ra P'_\sbt\ra P_\sbt \ra P''_\sbt\ra 0$ is a short
exact sequence;
\item $[P_\sbt]=0$ if $H_*(P_\sbt)=0$.  
\end{enumerate}
\end{defn}

It is easy to see that if one alters the above definition to
use complexes that are nonzero in negative dimensions (but are still
bounded) then this gives rise to an isomorphic group.  In this paper
we will make the convention that all of our chain complexes vanish in
negative dimensions, but this is only for simplicity of presentation.

For $n\geq 0$ define $\cP(R,n)$ to be the full subcategory of $\cP(R)$
consisting of complexes having $P_i=0$ for $i\notin [0,n]$.  Define 
$K(R \on S, n)$ in
a similar way to Definition~\ref{de:RonS} but using this smaller
collection of complexes.  
It is convenient to allow $n=\infty$
here, so that $K(R\on S)=K(R\on S,\infty)$.  
Notice that for $k\leq n\leq \infty$ the inclusion $\cP(R,k)\inc
\cP(R,n)$ induces a map $K(R\on S,k) \ra
K(R\on S,n)$.  

\begin{thm}[Foxby-Halvorsen]
\label{th:FH1}
The map $K(R\on S,k) \ra K(R\on S,n)$ is an
isomorphism for
$1\leq k\leq n\leq \infty$.
\end{thm}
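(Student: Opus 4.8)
The plan is to reduce to a single step in the length filtration and handle that step by working with complexes equipped with a chosen null-homotopy.

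\textit{Reductions.} Every object of $\cP(R)$ lies in $\cP(R,n)$ for some finite $n$, so $\cF(R\on S)$ together with its two families of relations is the filtered colimit of the length-$\le n$ data; hence $K(R\on S,\infty)=\colim_n K(R\on S,n)$. Since the map in the theorem factors through the tower $K(R\on S,k)\to K(R\on S,k+1)\to\cdots$, it is enough to prove that each transition map $K(R\on S,n)\to K(R\on S,n+1)$ is an isomorphism for $n\ge 1$. (The case $n=0$ is genuinely exceptional --- $\cP(R,0)$ is made of $S$-torsion projectives --- which is why the hypothesis is $k\ge 1$.)

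\textit{Complexes with a null-homotopy.} For $P_\sbt\in\cP(R,n)$ the complex $S^{-1}P_\sbt$ is exact if and only if there are $s\in S$ and an $R$-linear degree-$(+1)$ map $h$ with $dh+hd=s\cdot\id_{P_\sbt}$: a bounded exact complex of projectives over $S^{-1}R$ is contractible, and clearing denominators --- legitimate since the $P_i$ are finitely generated --- yields such an $h$ over $R$; conversely $h$ makes $S^{-1}P_\sbt$ contractible. Such a datum makes $(P_\sbt,h)$ a DG-module over the Koszul DGA $B_s=R[e]/(e^2)$ with $|e|=1$ and $de=s$; as $s$ varies (a filtered family under divisibility, via $(P_\sbt,h)\mapsto(P_\sbt,ah)$) these package exactly the complexes in question --- this is the ``modules over DGAs'' picture. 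Let $K^{h}(R\on S,n)$ be the Grothendieck group built from pairs $(P_\sbt,h)$ with $P_\sbt\in\cP(R,n)$, using short exact sequences compatible with the chosen $h$'s and killing pairs with $H_*(P_\sbt)=0$ (cf.\ Definition~\ref{de:RonS}). The forgetful map $K^{h}(R\on S,n)\to K(R\on S,n)$ is an isomorphism: choosing some $h_P$ for each $P_\sbt$ defines a candidate inverse, well defined because the possible null-homotopies form a ``connected'' family --- given $(s_0,h_0)$ and $(s_1,h_1)$ one passes to the common scalar $s_0s_1$ (replacing them by $s_1h_0$ and $s_0h_1$), observes that $h_0-h_1$ then satisfies $d\alpha+\alpha d=0$, and uses the straight-line homotopy $h_0+t(h_1-h_0)$ over $R[t]$ to identify the two classes and to check compatibility with the relations of $K(R\on S,n)$.

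\textit{Dropping the length.} It remains to show $K^{h}(R\on S,n)\to K^{h}(R\on S,n+1)$ is an isomorphism for $n\ge 1$. The cone on $\id_{Q_\sbt}$ gives the shift relation $[Q_\sbt[1]]=-[Q_\sbt]$; and since for $P_\sbt\in\cP(R,n)$ the ``top term'' $P_{n+1}$ already vanishes, any surgery that strips off $P_{n+1}$ will fix $K^{h}(R\on S,n)$ and hence furnish a retraction of the transition map, giving injectivity. For surjectivity, start with $(P_\sbt,h)\in\cP(R,n+1)$; the null-homotopy supplies $g\colon P_n\to P_{n+1}$ with $gd_{n+1}=s\cdot\id$, and assembling $d_{n+1}$, $g$, $s$ into the differentials of a short exact sequence of such pairs expresses $[(P_\sbt,h)]$ as $\pm[\,P_{n+1}\xrightarrow{s}P_{n+1}\,]+[(P'_\sbt,h')]$ with $P'_\sbt$ of length $\le n$ (after using $[Q_\sbt[1]]=-[Q_\sbt]$ to absorb a shift), the twisted disk $P_{n+1}\xrightarrow{s}P_{n+1}$ lying already in $\cP(R,1)\subseteq\cP(R,n)$. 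Thus the transition map is onto, hence an isomorphism; transporting the result back along the forgetful isomorphism proves the theorem.

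\textit{The main obstacle} is the last step: producing the length-dropping short exact sequence in enough generality and, above all, verifying that the induced operation descends to the Grothendieck groups, i.e.\ is additive on short exact sequences. This is exactly the part that is lengthy in \cite{FH}; dragging a chosen null-homotopy along (equivalently, staying inside the module categories of the DGAs $B_s$) is what makes the bookkeeping uniform. A minor point to settle en route is that one may choose $h$ with $h^2=0$, so that $(P_\sbt,h)$ is honestly a $B_s=R[e]/(e^2)$-module; this is harmless.
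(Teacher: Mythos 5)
Your high-level strategy is the same as the paper's: encode a choice of null-homotopy for multiplication-by-$s$ as a module structure over a DGA with a degree-one generator $e$ satisfying $de=s$, then prove that the length-truncation maps become isomorphisms on the corresponding Grothendieck groups. However, the proposal has two serious gaps, precisely at the points where you yourself flag difficulty, and one structural misunderstanding that is worth spelling out.

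First, you explicitly concede the heart of the matter: ``The main obstacle is the last step: producing the length-dropping short exact sequence in enough generality and, above all, verifying that the induced operation descends to the Grothendieck groups.'' That is not a minor loose end --- it is the content of Proposition~\ref{pr:main2} (the construction of $\cW_n$ via the folding functor $\Gamma$, Lemma~\ref{le:gamma1}, and the two exact sequences (\ref{eq:ex1}) and (\ref{eq:ex2})), and it occupies an entire section of the paper. A sketch that ends with ``assembling $d_{n+1}$, $g$, $s$ into the differentials of a short exact sequence... after using $[Q_\sbt[1]]=-[Q_\sbt]$'' does not yet establish that the resulting assignment is additive and kills contractibles, which is exactly what requires the careful bookkeeping.

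Second, your claim that the forgetful map $K^h(R\on S,n)\to K(R\on S,n)$ is an isomorphism --- in particular that a candidate inverse is well defined via a ``straight-line homotopy $h_0+t(h_1-h_0)$ over $R[t]$'' --- is not substantiated. A homotopy between module structures does not by itself produce the short exact sequence relations needed in a Grothendieck group cut off at length $n$. The paper's corresponding statement (Lemma~\ref{le:colim1}, feeding into Proposition~\ref{pr:main1}) is proved by placing the two structures on the ends of the cone sequence $0\to P_\sbt\to C(\id)\to\Sigma P_\sbt\to 0$, using Proposition~\ref{pr:exact-nh} to equip the cone with a $T_{s^2}$-structure --- which necessarily lands in length $n+1$ --- and then \emph{applying $\cW$ to come back down to length $n$}. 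This is the key insight that Proposition~\ref{pr:main2} is used in the proof of Proposition~\ref{pr:main1}; your outline treats the two steps as independent, and the well-definedness argument as you sketch it does not close.

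Third, your choice of the Koszul DGA $B_s=R[e]/(e^2)$, with the aside that ``one may choose $h$ with $h^2=0$... this is harmless,'' is not obviously harmless. The paper deliberately uses the free DGA $T_s=R[e]$ (so $e^2\ne 0$), and the formula defining the $T_{s^2}$-action on $\Gamma X$ contains the term $-e^2.da$; the verification that $\Gamma X$ is a $T_{s^2}$-module uses the identity $0=d(e^3u)=se^2u-e^3du$. Quotienting by $e^2$ collapses this to a tautology and does not produce the needed structure. There is also no justification given that an arbitrary null-homotopy can always be replaced by one with $h^2=0$ without affecting the Grothendieck class at fixed length $n$. So this ``minor point'' is, in fact, a substantive divergence from a construction that works.
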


Foxby and Halvorsen actually prove a more general version of the above
result that is necessary for their
applications.  Let $\und{S}=(S_1,\ldots,S_d)$ be a $d$-tuple of
multiplicative systems in $R$.  Say that a chain complex $C_\sbt$ is
\mdfn{$\und{S}$-exact} if $S_i^{-1}C_\sbt$ is exact for every $i$.  
Define $K(R\on \und{S},n)$ just as for $K(R\on S,n)$, except requiring
the complexes $P_\sbt$ to all be $\und{S}$-exact.  

\begin{thm}[Foxby-Halvorsen]
\label{th:FH2}
Let $\und{S}=(S_1,\ldots,S_d)$ be a $d$-tuple of multiplicative
systems.
If $d\leq k\leq n\leq \infty$ then $K(R\on \und{S},k) \ra K(R\on \und{S},n)$ is an
isomorphism.
\end{thm}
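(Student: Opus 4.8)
The plan is to run the proof of Theorem~\ref{th:FH1} over a differential graded algebra rather than over the ring $R$. Recall --- this is the content of the proof of Theorem~\ref{th:FH1} given in this paper --- that to a single multiplicative system $S$ one attaches a connective graded-commutative DGA $B_S$ over $R$ of homological amplitude one (a Koszul-type gadget carrying the null-homotopies that witness $S$-invertibility), in such a way that $K(R\on S,-)$ is realized as a Grothendieck group of complexes of finitely generated projective $B_S$-modules, and one then stabilizes the length of such complexes by trading it for the short-exact-sequence and acyclicity relations. For a $d$-tuple $\und S=(S_1,\dots,S_d)$, fix the first $d-1$ systems and form
\[
A \;=\; B_{S_1}\tens_R \cdots \tens_R B_{S_{d-1}},
\]
a connective graded-commutative DGA over $R$ concentrated in homological degrees $0$ through $d-1$ (the amplitude of a tensor product over $R$ is at most the sum of the amplitudes), with $A_0=R$. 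I would arrange that $P\mapsto A\tens_R P$ followed by restriction of scalars identifies bounded complexes of finitely generated projective $A$-modules with $\und S'$-exact complexes in $\cP(R)$, where $\und S'=(S_1,\dots,S_{d-1})$; since a single finitely generated projective $A$-module is already a length-$(d-1)$ complex over $R$, a complex of $A$-modules concentrated in $A$-module degrees $[0,k]$ goes to an object of $\cP(R)$ concentrated in degrees $[0,k+d-1]$. As an $\und S$-exact complex is exactly an $\und S'$-exact one that is in addition $S_d$-exact, this functor restricts to an equivalence between $S_d$-exact complexes of finitely generated projective $A$-modules and $\und S$-exact complexes in $\cP(R)$.

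Checking --- as one must --- that this equivalence matches up the relations defining the two Grothendieck groups, it yields for every $0\le k\le\infty$ an isomorphism $K(A\on S_d,k)\iso K(R\on\und S,k+d-1)$, compatible with the length-inclusion maps on the two sides. Now apply Theorem~\ref{th:FH1} with the DGA $A$ in place of $R$ and the single system $S_d$ in place of $S$ --- the argument there never uses more about the ground object than that it is a connective graded-commutative DGA with the relevant finiteness, so it should apply verbatim with $A$ --- to conclude that $K(A\on S_d,k)\to K(A\on S_d,n)$ is an isomorphism for $1\le k\le n\le\infty$. Transporting this along the displayed identification shows $K(R\on\und S,m)\to K(R\on\und S,m')$ is an isomorphism for $d\le m\le m'\le\infty$, which is the theorem. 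When $d=1$ the algebra $A$ is $R$ itself and this is just Theorem~\ref{th:FH1}; at the other extreme one can instead absorb all $d$ systems into a single amplitude-$d$ DGA $B_{S_1}\tens_R\cdots\tens_R B_{S_d}$, in which case no system is left over and one needs only the elementary fact that the Grothendieck group of bounded complexes of finitely generated projectives of length $\le k$ is the ordinary $K_0$ for every $k\ge 0$ --- the shift by $d$ then again produces the bound $d\le m$.

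The main obstacle is exactly the clause ``the argument of Theorem~\ref{th:FH1} applies verbatim over $A$'': that theorem is stated for a commutative ring, whereas $A$ genuinely has homology in positive degrees, so one must be sure the DGA-module proof was set up with enough generality --- that finitely generated projective $A$-modules and their finite resolutions behave as needed, that the null-homotopy constructions and the length-trading (telescope) step go through without assuming the ground object is concentrated in degree zero, and that all the complexes produced stay finitely generated projective over $A$. Granting that, the rest --- constructing $A$ and the comparison functor $A\tens_R(-)$, and matching the defining relations across it --- is routine, and the arithmetic is automatic: the amplitude shift by $d-1$ is precisely what converts the bound $1\le k$ over $A$ into the bound $d\le m$ over $R$.
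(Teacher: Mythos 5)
Your plan --- absorb $d-1$ of the systems into a DGA base $A$ of amplitude $d-1$ and then invoke the $d=1$ theorem ``over $A$'' for the remaining system $S_d$ --- is genuinely different from the paper's route, and it has real gaps. First, the characterization of the $d=1$ argument is off: the paper does not attach a single DGA $B_S$ to a multiplicative system $S$. It attaches a DGA $T_s$ to each \emph{element} $s\in S$ and proves (Proposition~\ref{pr:main1}) that $G(R\on S,n)\iso \colim_{s\in tS}G(T_s,n)$; the filtered colimit over the translation category of $S$ is essential and cannot be silently internalized into one ring. Second, the tensor product $B_{S_1}\tens_R\cdots\tens_R B_{S_{d-1}}$ is the wrong algebra even if the $B_{S_i}$ are replaced by $T_{s_i}$: a module over the tensor product forces the null-homotopies $e_1,\dots,e_{d-1}$ to commute with each other, which is not a constraint one wants to impose, and indeed the paper's $T_{\us}$ is the \emph{free} (non-commutative) $R$-algebra on $e_1,\dots,e_d$ precisely so that a $T_{\us}$-module is the same as $d$ independent null-homotopies.

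Third, and most substantially, the ``apply Theorem~\ref{th:FH1} with $A$ in place of $R$'' step is not something the paper sets up, and you correctly flag that it is the crux; but flagging it does not supply it. The $d=1$ proof leans on commutative-ring facts throughout: the isomorphism $S^{-1}\Hom_R(M,N)\iso\Hom_{S^{-1}R}(S^{-1}M,S^{-1}N)$ for finitely presented $M$ (used in Proposition~\ref{pr:Ts-existence} to produce a $T_s$-structure), the truncation argument with projective $R$-modules in Proposition~\ref{pr:contract}, and the reduction to generators $D^k_s(P)$ in Corollary~\ref{co:contract}. None of these has an evident counterpart when the base is a DGA with homology in positive degrees, and the very notion of ``finitely generated projective $A$-module'' in a way that makes $K(A\on S_d,k)\iso K(R\on\und S,k+d-1)$ both true and compatible with length truncation is left entirely undefined. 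The paper sidesteps all of this: it never inducts on $d$, but instead directly generalizes the folding functor $\Gamma$ to $d$ variables using the chain map $X\ra\Sigma^{n-d}(X_n\tens\Omega(\us))$ of Lemma~\ref{le:FH-main} (with $\Omega(\us)$ the Koszul complex of length $d$), and then parts (b) and (c) of Theorem~\ref{th:general} run identically to the $d=1$ case. The degree shift by $d$ appears there because the Koszul complex has length $d$, not because of a change of base.
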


From the perspective of algebraic geometry we are looking at
chain complexes of vector bundles on $\Spec R$ that are exact on a Zariski
open subset.  If every open subset had the form $\Spec S^{-1}R$ then
one could be content with Theorem~\ref{th:FH1}, but this is not the
case.  What is true instead is that every open subset of $\Spec R$ is
a finite union of opens of the form $\Spec S^{-1}R$, and therein lies
the importance of Theorem~\ref{th:FH2}.

It seems worth pointing out that the lower bound on $k$ in
Theorem~\ref{th:FH2} is the best possible.  Let $F$ be a field and let
$R=F[x_1,\ldots,x_d]_{(x_1,\ldots,x_d)}$.  Let $S_i=\{x_i^r\,|\, r\geq
0\}$.  The New Intersection Theorem says that all $\und{S}$-exact
complexes in $\cP(R,d-1)$ are exact, and so $K(R\on S,d-1)=0$.  But a
standard argument shows that $K(R\on S)\iso \Z$ (use the Resolution
Theorem to reduce to the Grothendieck group of finite length
$R$-modules, and then use d\'evissage).

\begin{remark}
The notation $K(R\on S)$ is used in \cite{W} and was inspired by the
notation for relative $K$-groups in \cite{TT}.  Because we will have
various Grothendieck groups flying around in the rest of the paper,
and because the choice between denoting these as ``$G$'' or ``$K$''
becomes awkward,  we will just write everything with a ``$G$''.  In particular, we
use $G(R\on S,n)$ for the group denoted $K(R\on S,n)$ above.
\end{remark}

\smallskip

\subsection{Outline of the argument}
\label{se:outline}
For any $s\in R$ let $T_s$ be the unique DGA whose underlying graded algebra is
$R[e]$, where $\deg e=1$, and whose differential satisfies $de=s$.  Of
course the differential is completely described by the formula
\[ d(e^n)=\begin{cases} 0 & \text{if $n$ is even}, \\
se^{n-1} & \text{if $n$ is odd}.
\end{cases}
\]  
Note that for any $t\in R$ there is a map of DGAs $T_{st} \ra T_s$
that sends $e$ to $te$.  

If $C$ is a chain complex of $R$-modules, then giving a left $T_s$-module
structure on $C$ (that extends the $R$-module structure) is equivalent to
specifying a null-homotopy for the multiplication-by-$s$ map $C\ra C$.
This is just the statement that a map of $R$-DGAs 
$T_s\ra \uHom(C,C)$ is determined by the image of $e$, which must be  an
element $\tilde{e}\in\uHom(C,C)_1$ such that
$d\tilde{e}=s\cdot \id_C$.
In particular, a $T_1$-structure on a chain complex $C$ is
the same as a contracting homotopy for $C$.

If $S$ is a multiplicative system, let $tS$ be the category whose
objects are the elements of $S$ and where the set of morphisms from
$s$ to $t$ is $\{x\in S\,|\, sx=t\}$.  This is the so-called {\it
translation category} of the monoid $S$ (under multiplication).
Observe that $tS$ is a filtered category, and that we have defined a
functor $T\colon tS^{op} \ra \DGA$ sending $s\mapsto T_s$.

Write $\cP(T_s,n)$ for the category of dg-modules over $T_s$ whose
underlying chain complex lies in $\cP(R,n)$.  As always, we abbreviate
$\cP(T_s)=\cP(T_s,\infty)$.  

\begin{defn}
\label{de:G(Ts)}
Let $\cF(T_s,n)$ be the free abelian group on isomorphism classes of
objects in $\cP(T_s,n)$.  
Define $G(T_s,n)$ to be the quotient of $\cF(T_s,n)$ by the following
relations:
\begin{enumerate}[(1)]
\item $[X]=[X']+[X'']$ whenever $0\ra X'\ra X \ra X''\ra 0$ is a short
exact sequence in $\cP(T_s,n)$;
\item $[X]=0$ if $H_*(X)=0$.  
\end{enumerate}
\end{defn}

The DGA maps $T_{st}\ra T_s$ yield restriction functors $\cP(T_s,n)\ra
\cP(T_{st},n)$.  These are clearly exact, and so induce maps on Grothendieck groups
\[ j_{st\la s}\colon G(T_s,n) \ra G(T_{st},n).
\]
We will usually drop the subscripts and just write all such maps as ``$j$''.
These maps assemble to define a functor $tS \ra \Ab$
given by $s\mapsto G(T_s,n)$.  

A $T_s$-module is, in particular, a chain complex in $\cP(R,n)$.  
The null-homotopy of the
multiplication-by-$s$ map shows that this complex becomes exact upon
localization at $S$.   So we have the evident homomorphism $G(T_s,n)
\ra G(R\on S,n)$.  Obviously this gives a map $\colim_{tS} G(T_s,n)
\ra G(R\on S,n)$.  A key step in proving the  Foxby-Halvorsen results
will be the following:

\begin{prop}
\label{pr:main1}
For any $1\leq n \leq \infty$, 
the map $\colim_{tS} G(T_s,n) \ra G(R\on S,n)$ is an isomorphism.
\end{prop}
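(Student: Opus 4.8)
The plan is to build a two-sided inverse to the map $\Phi\colon\colim_{tS}G(T_s,n)\ra G(R\on S,n)$. The content of the map is that a $T_s$-module structure on a complex $P_\sbt\in\cP(R,n)$ witnesses the $S$-exactness of $P_\sbt$ (via the null-homotopy of multiplication-by-$s$), and conversely a relation of the form $S^{-1}P_\sbt$ exact should, after multiplying by a suitable $s\in S$, be realizable on the nose by a genuine chain-level null-homotopy. So the heart of the argument is a \emph{lifting lemma}: if $P_\sbt\in\cP(R,n)$ and $S^{-1}P_\sbt$ is exact, then there exists $s\in S$ such that multiplication-by-$s$ on $P_\sbt$ is chain-null-homotopic, i.e.\ $P_\sbt$ lifts to an object of $\cP(T_s,n)$. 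This is where I expect the real work to be, and the natural approach is by induction on $n$: localizing at $S$, exactness of $S^{-1}P_\sbt$ lets one split off the top, write down a contracting-type homotopy over $S^{-1}R$, clear denominators to get a degree-$1$ map $\tilde e$ on $P_\sbt$ with $d\tilde e - s\cdot\id$ supported in strictly lower homological degree (or hitting a subcomplex one can handle by the inductive hypothesis), then absorb the error by composing with the transition DGA map $T_{st}\ra T_s$ and multiplying through by another element of $S$. One must be a little careful that $\tilde e$ can be chosen to square appropriately or at least that $d\tilde e=s\cdot\id_P$ exactly holds after the clearing step — but since a $T_s$-module structure only requires the single equation $d\tilde e = s\cdot\id_C$ (there is no relation $\tilde e^2=0$ imposed, $e^2\neq 0$ in $T_s$), this is just the assertion that $s\cdot\id_P$ is a boundary in $\uHom(P,P)$, which is exactly $S$-exactness read in the right chain complex.

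Granting the lifting lemma, surjectivity of $\Phi$ is immediate: any generator $[P_\sbt]$ of $G(R\on S,n)$ is the image of its lift $[P_\sbt\text{ with }\tilde e]\in G(T_s,n)$ for the appropriate $s$, and these assemble compatibly in the colimit since the transition maps $j$ are induced by the DGA maps $T_{st}\ra T_s$ which do nothing to the underlying complex.

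For injectivity, suppose $x\in G(T_s,n)$ maps to $0$ in $G(R\on S,n)$. Unwinding the definition of the colimit and of $G(R\on S,n)$, after passing further along $tS$ (replacing $s$ by $st$) we may assume $x=\sum_i\epsilon_i[X_i]$ where the underlying complexes of the $X_i$ satisfy a finite set of the defining relations of $G(R\on S,n)$: namely each relevant short exact sequence $0\ra P'_\sbt\ra P_\sbt\ra P''_\sbt\ra 0$ of underlying complexes, and each relevant acyclicity relation $H_*(P_\sbt)=0$. The key point is that these relations can themselves be \emph{promoted to the DGA level} after multiplying by an element of $S$: a short exact sequence of $T_s$-modules restricts along $T_{st}\ra T_s$ to a short exact sequence of $T_{st}$-modules (split the sequence of underlying graded modules and compare the two $\tilde e$-actions — their difference on the subquotients is killed after another multiplication by $S$, using the lifting lemma applied to the mapping-cone-type complexes), and if $H_*(P_\sbt)=0$ then $\id_P$ itself is a boundary over $S^{-1}R$, so after clearing denominators some $T_{st}$-structure on $P_\sbt$ has $H_*=0$ forced by an honest contracting homotopy, hence $[P_\sbt]=0$ already in $G(T_{st},n)$. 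Thus every relation witnessing $\Phi(x)=0$ pulls back, in the colimit, to a relation in $\colim_{tS}G(T_s,n)$, giving $x=0$.

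The main obstacle is the lifting lemma together with this ``promotion of relations'' step; everything else is bookkeeping with filtered colimits (using that $tS$ is filtered, which is noted in the outline, so finitely many elements and finitely many relations can always be pushed into a single stage). A secondary technical point, and the reason the hypothesis $n\geq 1$ appears, is that the inductive clearing-denominators argument needs at least one degree above the bottom to place the homotopy; for $n=0$ the only $S$-exact complex with $P_0$ the sole term and $S^{-1}P_0=0$ contributes trivially, but the inductive step genuinely uses $n\geq 1$ to get started, which matches the statement of the proposition.
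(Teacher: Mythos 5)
Your outline identifies the right surjectivity argument (the lifting lemma is exactly Proposition~\ref{pr:Ts-existence}, though the paper proves it in one stroke via the isomorphism $S^{-1}\uHom_R(P,P)\iso\uHom_{S^{-1}R}(S^{-1}P,S^{-1}P)$ rather than by induction on $n$), and you correctly see that short exact sequences of underlying complexes must be promoted to exact sequences of $T$-modules after passing to a product of the $s$'s — that is the content of Proposition~\ref{pr:exact-nh}.

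There is, however, a genuine gap in your injectivity/well-definedness step, and it is precisely the point the paper flags as the key subtlety. When you ``split the sequence of underlying graded modules and compare the two $\tilde e$-actions,'' you are implicitly claiming that two different $T_s$-module structures on the same underlying complex $P_\sbt\in\cP(R,n)$ give the same class in $\colim_s G(T_s,n)$. The natural way to see this is to form the cone on $\id_{P_\sbt}$ with the first structure on the sub and the (suspension of the) second structure on the quotient; Proposition~\ref{pr:exact-nh} then produces a $T_{s^2}$-structure on $C(\id)$, and contractibility gives $[P^1_\sbt]=[P^2_\sbt]$. But this identity lives in $G(T_{s^2},n+1)$, not $G(T_{s^2},n)$: the cone and $\Sigma P_\sbt$ have length $n+1$. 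To descend back to amplitude $n$ you must apply the map $\cW\colon G(T_{s^2},n+1)\ra G(T_{s^4},n)$ of Proposition~\ref{pr:main2}, which commutes appropriately with the $i$'s and $j$'s. Your sketch (``using the lifting lemma applied to the mapping-cone-type complexes'') cannot close this length-$(n+1)$ gap — the lifting lemma changes $s$, not the amplitude. This is why the paper's proof of Proposition~\ref{pr:main1} quotes Proposition~\ref{pr:main2}, even though at first glance the two look logically independent; without invoking $\cW$, the argument does not close.
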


Let $i_{s,n} \colon \cP(T_s,{n}) \ra \cP(T_s,n+1)$ be the evident
inclusion, and use the same notation for the induced map $G(T_s,{n})
\ra G(T_s,n+1)$.  Let $\N$ denote the category whose objects are the
natural numbers $n\geq 1$ and where there is a unique map from $n$ to
$n+1$, for every $n$.  The $i_{s,n}$ maps are clearly compatible
with the change-of-rings maps induced by $T_{st}\ra T_s$, and so
putting them together we have a diagram $tS\times \N \ra \Ab$
sending $(s,n)\mapsto G(T_s,n)$.  Here is a partial depiction of this large diagram:

\[ \xymatrixrowsep{1pc}\xymatrix{
& {}\phantom{ABCDE}\ar[d] & {}\phantom{ABCDEFG}\ar[d] \\
\cdots &  G(T_s,n)\ar[d]^j \ar[l] & G(T_s,n-1)\ar[d]^j\ar[l]^{i_{s,n}} &\cdots \ar[l]\\
\cdots &  G(T_{st},n)\ar[d]\ar[l] & G(T_{st},n-1)\ar[d]\ar[l]^{i_{st,n}} &\cdots \ar[l] \\
& {} & {} \\
& \Downarrow & \Downarrow  \\
\cdots & G(R\on S,n) \ar[l] & G(R\on S,n-1)\ar[l]^{i_n}&\cdots \ar[l] \\
\save "1,2"."4,2"*[F]\frm{}\restore 
\save "1,3"."4,3"*[F]\frm{}\restore 
}
\]
The two framed boxes each depict a slice $tS \ra \Ab$ corresponding
to fixing a value of $n$ (note that because of typographical
constraints we have not drawn a realistic
model for $tS$---this category need not be linear).
 The colimit of these slices is written underneath them.

The second key component of our proof is the following:

\begin{prop}
\label{pr:main2}
For $n\geq 2$ there exist group homomorphisms $\cW_n\colon G(T_s,n)\ra
G(T_{s^2},n-1)$ such that $\cW_n i=j$ and $i\cW_n=j$.  
\end{prop}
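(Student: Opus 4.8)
The plan is to construct $\cW_n$ from an explicit functor $\cP(T_s,n)\to\cP(T_{s^2},n-1)$ that folds down the top of a complex using the $e$-action, and then to verify the three things needed: that it descends to Grothendieck groups, that $\cW_n i=j$, and that $i\cW_n=j$. For the construction: given $X\in\cP(T_s,n)$, write $h$ for the degree $+1$ operator by which $e$ acts, so that $dh+hd=s\cdot\id$. Keep the complex of $X$ unchanged in degrees $\le n-2$, and replace the two top modules $X_n\xrightarrow{d}X_{n-1}$ by a single finitely generated projective in degree $n-1$ assembled from them via $h$; because the bare truncation is neither $S$-exact nor acyclicity-preserving, one must also carry along an auxiliary copy of $X_n$ behaving as a contractible summand. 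Then equip the result with the null-homotopy of $s^2\cdot\id$ obtained from $s\cdot h$ together with a correction term built out of $d$ and $h$. A short computation with $dh+hd=s$ shows that this really is a dg-module over $T_{s^2}$ whose underlying complex lies in $\cP(R,n-1)$, and the construction is visibly functorial and exact in $X$.

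First I would check that $\cW_n$ descends to a homomorphism $G(T_s,n)\to G(T_{s^2},n-1)$. Since it carries short exact sequences of $T_s$-modules to short exact sequences of $T_{s^2}$-modules, it suffices to see that an acyclic $X$ goes to a $\cW_n(X)$ of trivial class. Every acyclic object of $\cP(T_s,n)$ is, as a complex, a finite sum of elementary contractible complexes $B\xrightarrow{\id}B$ placed in consecutive degrees; transporting the $T_s$-structure through this decomposition and using additivity reduces the problem to the elementary pieces, on which the output is visibly acyclic. This is exactly what the auxiliary copy of $X_n$ is for: the naive fold, without it, would send the elementary piece supported in the top two degrees to a Koszul-type complex $B\xrightarrow{s}B$, whose class in $G(T_{s^2},n-1)$ need not vanish — and this is also where the passage from $s$ to $s^2$ first becomes necessary. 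Next, $\cW_n i=j$ is immediate from the recipe: if $X_n=0$ the fold does nothing and the correction term vanishes, so the $s^2$-null-homotopy produced is precisely $s\cdot h$, which is the one coming from the DGA map $T_{s^2}\to T_s$; hence $\cW_n(iX)=jX$ on the nose.

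For $i\cW_n=j$, the content is the equality $[jX]=[i\cW_n X]$ in $G(T_{s^2},n)$. I would prove it by producing, naturally in $X$, an auxiliary $T_{s^2}$-module $Z\in\cP(T_{s^2},n)$ — a mapping-cylinder-type object for the folding comparison — together with two comparison surjections $Z\twoheadrightarrow jX$ and $Z\twoheadrightarrow i\cW_n X$ of dg-modules, whose kernels are contractible in the generic case (and are reduced to the trivial class by one further short exact sequence in the degenerate cases, e.g. when $s$ is a zero-divisor on $X_n$). Then $[jX]=[Z]=[i\cW_n X]$.

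The step I expect to be the real obstacle is making the construction in Step 1 precise enough that Steps 2 and 4 go through — that is, choosing the twisted differential and the correction term in the null-homotopy so that $\cW_n$ genuinely descends to $G$ (the naive fold does not) and so that the comparison object $Z$ of Step 4 is built from honest dg-modules rather than merely from complexes. Everything else is bookkeeping once those correction terms are pinned down, and it is precisely they — available only because one is permitted to pay a factor of $s$ — that make the required splittings and homotopies integral.
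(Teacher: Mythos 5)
Your high-level plan — fold the top degree down, correct by a Koszul-type term, and verify the two identities — is the right one and matches the paper in spirit, but several of the load-bearing details are wrong or inconsistent as written, and I don't think the argument goes through without changing them.

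First, the role of the ``auxiliary copy of $X_n$'' is incoherent. You say it should behave as a \emph{contractible summand}, and you also say it is there to cancel the Koszul complex $B\xrightarrow{s}B$ produced on elementary pieces. A contractible summand contributes $0$ to the Grothendieck group and so cancels nothing. What is actually needed is to \emph{subtract} the class of the (non-contractible) Koszul complex. In the paper's version, the folded-down functor $\Gamma$ puts $X_n$ in degree $n-2$, yields $\Gamma(D^n_s P)\cong \Sigma^{n-2}(K(s)\otimes P)$ on the top elementary piece, and the homomorphism is the \emph{difference} $\cW_n(X)=[\Gamma X]-[\Sigma^{n-2}K(s)\otimes X_n]$. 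Your claim that ``the output is visibly acyclic'' on elementary pieces directly contradicts your own correct observation one sentence earlier that the naive fold yields a Koszul complex, which is acyclic only when $s$ is a unit.

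Second, the descent argument has a gap. You write that a contractible object decomposes as a direct sum of elementary pieces and that one can ``transport the $T_s$-structure through this decomposition.'' That decomposition exists only as $R$-complexes; there is no reason the $T_s$-action respects it. What actually works (Proposition~\ref{pr:contract} in the paper) is a \emph{filtration} by $T_s$-submodules: the top truncation $\tau_{\leq n-1}X$ and the subcomplex $0\to X_n\xrightarrow{d}d(X_n)\to 0$, with its forced $T_s$-structure, give a short exact sequence of $T_s$-modules, and iterating reduces to the pieces $D^k_s(P)$ with their specific (multiplication-by-$s$) structure. Exactness of $\cW_n$ then lets you conclude; additivity over a non-existent direct sum decomposition does not.

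Third, your plan for $i\cW_n=j$ via a cylinder-type $Z$ with two surjections onto $jX$ and $i\cW_n X$, each with contractible kernel, cannot work as stated. For $n=2$ the class $\cW_2(X)$ is a genuine difference in $G(T_{s^2},1)$ and is not represented by a single module, so there is no dg-module surjection onto ``$i\cW_2 X$.'' Even for larger $n$, the relevant kernel in the paper's cone-based argument is $\Sigma^{n-1}(K(s)\otimes X_n)$, which is \emph{not} contractible except in trivial cases, so ``contractible in the generic case'' is exactly backwards. The correct mechanism is: take the cone $Cf$ on the natural map $f\colon X\to\Sigma^{n-1}(K(s)\otimes X_n)$, use Remark~\ref{re:cone} to get a $T_{s^2}$-structure, and play off the two short exact sequences $0\to\Sigma^{n-1}(K(s)\otimes X_n)\to Cf\to\Sigma X\to 0$ and $0\to D^{n+1}_{s^2}(X_n)\to Cf\to\Sigma\Gamma X\to 0$; only the second has a contractible kernel, and the first is exactly what forces the Koszul term into the definition of $\cW_n$. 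You should also double-check your degree bookkeeping: placing the folded copy of $X_n$ in degree $n-1$ rather than $n-2$ does not land the output in $\cP(T_{s^2},n-1)$ with a well-defined differential; the paper's choice of degree $n-2$, with the cross-term $a\mapsto ea$, is what makes $\Gamma X$ a chain complex of projectives concentrated in $[0,n-1]$.
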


The theorem we are after is an immediate corollary of
Propositions~\ref{pr:main1} and \ref{pr:main2}:

\begin{cor}
\label{co:maincor}
For any $n\geq 1$ the map
$G(R\on S,n) \ra G(R\on S,n+1)$ is an isomorphism.  Consequently, the
maps $G(R\on S,n) \ra G(R\on S,\infty)$ are isomorphisms.  
\end{cor}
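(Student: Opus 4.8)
The plan is to deduce Corollary~\ref{co:maincor} formally from Propositions~\ref{pr:main1} and~\ref{pr:main2}, exploiting the fact that both the change-of-rings maps $j$ and the inclusion maps $i$ fit into the bifunctor $tS\times\N\to\Ab$. Write $A_n=\colim_{tS}G(T_s,n)$. By Proposition~\ref{pr:main1} the natural maps $A_n\to G(R\on S,n)$ are isomorphisms for $1\le n\le\infty$, and they are compatible with the inclusion-induced maps, since sending a $T_s$-module to its underlying complex commutes with $\cP(T_s,n)\inc\cP(T_s,n+1)$ and $\cP(R,n)\inc\cP(R,n+1)$. So it suffices to show that the map $\bar\imath\colon A_n\to A_{n+1}$ induced on colimits by the natural transformation $i$ is an isomorphism for every $n\ge1$; note the needed maps, namely $\cW_{n+1}$, exist precisely because $n+1\ge2$.

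For surjectivity I would start from an arbitrary element of $A_{n+1}$; since $tS$ is filtered, it is the image of some $[Y]\in G(T_s,n+1)$. Applying $\cW_{n+1}\colon G(T_s,n+1)\to G(T_{s^2},n)$ and using $i\cW_{n+1}=j$, one gets $i_{s^2,n}(\cW_{n+1}[Y])=j_{s^2\la s}[Y]$ in $G(T_{s^2},n+1)$. Now push both sides into $A_{n+1}$: the right-hand side has the same image there as $[Y]$, being obtained from $[Y]$ along the structure morphism $s\to s^2$ of $tS$, while the left-hand side is $\bar\imath$ applied to the image of $\cW_{n+1}[Y]$ in $A_n$. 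Hence the given element lies in $\im\bar\imath$.

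For injectivity, let $x\in A_n$ be represented by $[X]\in G(T_s,n)$ with $\bar\imath(x)=0$. Filteredness of $tS$ produces a morphism $s\to u$ with $j_{u\la s}(i_{s,n}[X])=0$ in $G(T_u,n+1)$, and since $i$ and $j$ commute this reads $i_{u,n}(j_{u\la s}[X])=0$. Applying $\cW_{n+1}\colon G(T_u,n+1)\to G(T_{u^2},n)$ and using $\cW_{n+1}i=j$ yields $j_{u^2\la u}(j_{u\la s}[X])=j_{u^2\la s}[X]=0$ in $G(T_{u^2},n)$; since $s\to u\to u^2$ is a morphism of $tS$, this means exactly that $x=0$. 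Together with the reduction above, this shows $G(R\on S,n)\to G(R\on S,n+1)$ is an isomorphism for all $n\ge1$.

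For the final assertion I would observe that every complex in $\cP(R)$ is bounded, and every short exact sequence of such complexes — and every vanishing of homology — already occurs inside some $\cP(R,n)$; hence $G(R\on S,\infty)=\colim_n G(R\on S,n)$ with transition maps the $i_n$. As each $i_n$ with $n\ge1$ is an isomorphism, so are all the maps $G(R\on S,n)\to G(R\on S,\infty)$ for $n\ge1$. The argument is essentially formal, so I do not expect a genuine obstacle; the only delicate bookkeeping is that the witness $s\to u$ coming from filteredness must be post-composed with $u\to u^2$ before $\cW_{n+1}$ can be applied, together with the compatibility of the $i$'s and $j$'s used to reduce the computation to the colimit over $tS$.
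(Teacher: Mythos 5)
Your proof is correct and follows essentially the same strategy as the paper: lift to a $G(T_s,\cdot)$ level via Proposition~\ref{pr:main1}, apply $\cW_{n+1}$, and use the identities $i\cW=j$ and $\cW i=j$ to conclude. The only cosmetic difference is that you work entirely inside the colimits $A_n=\colim_{tS}G(T_s,n)$ and show $\bar\imath$ is an isomorphism there, whereas the paper argues directly in $G(R\on S,n)$ by choosing representatives; your version spells out the filteredness bookkeeping (finding $s\to u$ killing the class, then composing with $u\to u^2$) that the paper's injectivity step leaves implicit.
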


\begin{proof}
It is trivial that $G(R\on S,\infty)=\colim_n G(R\on S,n)$, and so the
second statement follows from the first.  The proof of the first
statement is formal, but we include it anyway.  Let $\alpha\in G(R\on S,n+1)$.  By
Proposition~\ref{pr:main1} there exists an $s\in S$ and an $\alpha'\in
G(T_s,n+1)$ that maps to $\alpha$.  Then $\cW_{n+1}(\alpha')$ yields
an element in $G(R\on S,n)$ that must map to $\alpha$ because
$i\cW_{n+1}=j$.  This shows that $i_n$ is surjective.  Injectivity
is similar: if $\beta\in G(R\on S,n)$ is in the kernel, then by
Proposition~\ref{pr:main1} there exists $s\in S$ for which we can lift
$\beta$ to a $\beta'\in G(T_s,n)$ that is in the kernel of
$i_{s,n}$.  Then
$0=\cW_{n+1}(0)=\cW_{n+1}i_{s,n}(\beta')=j(\beta')$.  
Since $\beta'$ and $j\beta'$ map to the same element 
in $G(R\on S,n)$, namely $\beta$, we must have $\beta=0$.  
\end{proof}

We have now explained how the first main result (Theorem~\ref{th:FH1})
is an immediate consequence
of Propositions~\ref{pr:main1} and \ref{pr:main2}.  The story is
slightly trickier than we have indicated, however: a key insight is that
Proposition~\ref{pr:main2} is actually {\it used to prove\/}
Proposition~\ref{pr:main1}.  So in reality the structure of the
argument is:
\begin{itemize}
\item Prove Proposition~\ref{pr:main2},
\item Use Proposition~\ref{pr:main2} to prove
Proposition~\ref{pr:main1}, and finally
\item Use Propositions~\ref{pr:main1} and \ref{pr:main2} to deduce Theorem~\ref{th:FH1}.
\end{itemize}

Everything we have said so far concerns the proof of
Theorem~\ref{th:FH1}, which is merely the $d=1$ case of
Theorem~\ref{th:FH2}.  However, the proof has been structured so that
it adapts almost verbatim to the case of general $d$.  Details are
in Section~\ref{se:general}.

\begin{remark}
At the risk of seeming repetitive we again point out that the basic
idea for the proof is entirely due to \cite{FH}.  In particular, that paper
gives a version of the maps $\cW_n$.  The contribution of the present
paper is the repackaging of their construction into the above outline,
which simplifies many technicalities.
\end{remark}

\section{Preliminary material}
\label{se:prelim}
This section establishes some basic observations, constructions, and notation.

\begin{prop}
\label{pr:Ts-existence}
Let $P_\sbt$ be a bounded chain complex of finitely-generated projective
$R$-modules such that $S^{-1}P_\sbt$ is exact.  Then there exists
$s\in S$ such that $P_\sbt$ has the structure of a $T_s$-module.
\end{prop}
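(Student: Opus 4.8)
The plan is to build the $T_s$-module structure by clearing denominators in a contracting homotopy for $S^{-1}P_\sbt$. As recalled in Section~\ref{se:outline}, putting a $T_s$-module structure on $P_\sbt$ extending its $R$-module structure is the same as specifying an element $\tilde e$ of degree $1$ in the DGA $\uHom_R(P_\sbt,P_\sbt)$ with $d\tilde e = s\cdot\id$, where $d$ here denotes the differential of this Hom-DGA (so that $d\tilde e$ is the composite $\partial\tilde e + \tilde e\,\partial$ built from the differential $\partial$ of $P_\sbt$). Thus it suffices to produce some $s\in S$ together with such an element $\tilde e$.

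First I would pass to the localization. The complex $S^{-1}P_\sbt$ is a bounded complex of finitely generated projective $S^{-1}R$-modules which, by hypothesis, is exact; a bounded exact complex of projectives is contractible (peel off split projective summands one degree at a time), so we may fix a contracting homotopy, i.e.\ an element $H$ of degree $1$ in $\uHom_{S^{-1}R}(S^{-1}P_\sbt,S^{-1}P_\sbt)$ with $dH = \id$. Because each $P_i$ is finitely generated projective (hence finitely presented) and $P_\sbt$ is bounded, the natural map
\[ S^{-1}\uHom_R(P_\sbt,P_\sbt)\ \lra\ \uHom_{S^{-1}R}(S^{-1}P_\sbt,S^{-1}P_\sbt) \]
is an isomorphism of DGAs over $S^{-1}R$. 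Consequently $H = \tilde e_0/u$ for some degree-$1$ element $\tilde e_0$ of $\uHom_R(P_\sbt,P_\sbt)$ and some $u\in S$.

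Now clear the remaining denominator. Applying the Hom-DGA differential and using $dH=\id$, the degree-$0$ elements $d\tilde e_0$ and $u\cdot\id$ of $\uHom_R(P_\sbt,P_\sbt)$ become equal after inverting $S$, so there is $v\in S$ with $v\,(d\tilde e_0 - u\cdot\id) = 0$. Put $s = uv\in S$ and $\tilde e = v\,\tilde e_0$. Since $v$ lies in the (central) ground ring, the differential is $v$-linear, so $d\tilde e = v\,d\tilde e_0 = v(u\cdot\id) = s\cdot\id$. Hence $\tilde e$ defines a map of $R$-DGAs $T_s\ra \uHom_R(P_\sbt,P_\sbt)$ sending $e\mapsto\tilde e$, which is precisely a $T_s$-module structure on $P_\sbt$, as desired.

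The argument is routine, and I do not expect a genuine obstacle. The two points deserving a little care are that $\uHom$ commutes with the localization $S^{-1}(-)$ in this situation (which is exactly where finite generation of the $P_i$ and boundedness of $P_\sbt$ enter), and that the denominators have to be cleared in two separate stages: once to realize the homotopy itself by an honest $R$-linear cochain $\tilde e_0$, and once more to upgrade the relation $d\tilde e = s\cdot\id$ from ``holds after localizing'' to ``holds on the nose.''
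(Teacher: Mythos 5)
Your proof is correct and follows essentially the same route as the paper's: pass to the localization, use that $S^{-1}\uHom_R(P_\sbt,P_\sbt)\iso\uHom_{S^{-1}R}(S^{-1}P_\sbt,S^{-1}P_\sbt)$ (via finite generation and boundedness) to lift a contracting homotopy to a fraction $\tilde e_0/u$, then clear denominators a second time to make $d\tilde e = s\cdot\id$ hold on the nose. The only differences from the paper are cosmetic: you name the contracting homotopy before invoking the Hom-localization isomorphism rather than after, and the paper spells out the reduction to $M=R$ in proving that isomorphism, but the content is identical.
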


\begin{proof}
For  $R$-modules $M$ and $N$ there is a natural map $S^{-1}\Hom_R(M,N) \ra
\Hom_{S^{-1}R}(S^{-1}M,S^{-1}N)$.  This is clearly an isomorphism when
$M=R$, and both the functors $S^{-1}\Hom_R(\blank,N)$ and
$\Hom_{S^{-1}R}(S^{-1}(\blank),S^{-1}N)$ commute with finite direct
sums and are left-exact.  It follows that the natural map is an
isomorphism for all finitely-presented modules---and in particular,
for all finitely-generated projectives.  

It follows from the preceding paragraph that  
the canonical map of chain complexes $S^{-1}\uHom_R(P_\sbt,P_\sbt)\ra
\uHom_{S^{-1}R}(S^{-1}P_\sbt,S^{-1}P_\sbt)$ is an isomoprhism.  This uses that
$P_\sbt$ is bounded and that its modules are finitely-generated.
Since $S^{-1}P_\sbt$ is exact, it follows that $S^{-1}\uHom_R(P_\sbt,P_\sbt)$ is
exact.  In particular, there exists an element $e\in
S^{-1}\uHom(P_\sbt,P_\sbt)_1$  such that $de=\id$.  The element $e$
may be written as $e'/s$ for some $e'\in \uHom(P_\sbt,P_\sbt)_1$ and some $s\in
S$, and we have $de'=s\id$ in $S^{-1}\uHom(P_\sbt,P_\sbt)$.  So there exists a
$u\in S$ such that $u(de'-s\id)=0$ in $\uHom(P_\sbt,P_\sbt)$.  This means
$d(ue')=(us)\cdot \id$, and so $ue'$ gives $P_\sbt$ the structure of a
$T_{us}$-module.
\end{proof}

\begin{prop}
\label{pr:exact-nh}
Let $0\ra A \ra B \ra C \ra 0$ be a short exact sequence of chain
complexes of finitely-generated projective $R$-modules.  If $A$ is a
$T_s$-module and $C$ is a $T_t$-module, then there exists a
$T_{st}$-module structure on $B$ such that the exact sequence is an
exact sequence of $T_{st}$-modules (where $A$ and $C$ become
$T_{st}$-modules via restriction-of-scalars along the maps $T_{st}\ra T_s$
and $T_{st}\ra T_t$).  
\end{prop}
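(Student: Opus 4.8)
The plan is to unwind what a $T_{st}$-module structure on $B$ amounts to and then build the required null-homotopy by hand. Recall that giving $B$ a $T_{st}$-module structure extending its $R$-module structure is the same as giving an element $\tilde{e}_B \in \uHom_R(B,B)_1$ with $d\tilde{e}_B = (st)\cdot\id_B$. The $T_s$-structure on $A$ gives $\tilde{e}_A\in\uHom(A,A)_1$ with $d\tilde{e}_A = s\cdot\id_A$, and the $T_t$-structure on $C$ gives $\tilde{e}_C\in\uHom(C,C)_1$ with $d\tilde{e}_C = t\cdot\id_C$. I want to produce $\tilde{e}_B$ so that it restricts to $t\cdot\tilde{e}_A$ on $A$ (this is the chosen $T_{st}$-structure on $A$ coming from $T_{st}\ra T_s$, $e\mapsto te$) and induces $s\cdot\tilde{e}_C$ on $C$ (the $T_{st}$-structure on $C$ from $T_{st}\ra T_t$, $e\mapsto se$). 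Exactness of the sequence of $T_{st}$-modules is then automatic, since the underlying maps $A\ra B\ra C$ are already the given ones.

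First I would choose, once and for all, an $R$-linear splitting $\sigma\colon C\ra B$ of the surjection $p\colon B\ra C$ in each degree; this exists because each $C_i$ is projective, and it gives a direct-sum decomposition $B_i \iso A_i\oplus C_i$ compatible with the inclusion $\iota\colon A\ra B$ and $p$, though $\sigma$ need not be a chain map. Relative to this splitting, write the differential of $B$ in block form as $d_B = \begin{pmatrix} d_A & \tau \\ 0 & d_C\end{pmatrix}$ for some degree $-1$ map $\tau\colon C\ra A$ (the "connecting" term); the condition $d_B^2=0$ forces $d_A\tau + \tau d_C = 0$, i.e.\ $\tau$ is a chain map $C\ra A[-1]$, equivalently a degree $-1$ chain map. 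Now set, in block form,
\[
\tilde{e}_B = \begin{pmatrix} t\cdot\tilde{e}_A & h \\ 0 & s\cdot\tilde{e}_C \end{pmatrix}
\]
where $h\colon C\ra A$ is a degree $1$ map to be determined. The lower-triangular shape guarantees $\tilde{e}_B$ restricts to $t\tilde{e}_A$ on $A$ and descends to $s\tilde{e}_C$ on $C$, as required. Computing $d\tilde{e}_B = d_B\circ\tilde{e}_B + \tilde{e}_B\circ d_B$ in block form, the diagonal entries are $d(t\tilde{e}_A) = st\cdot\id_A$ and $d(s\tilde{e}_C) = st\cdot\id_C$ automatically, and the off-diagonal $(A,C)$-entry works out to
\[
(d_A h + h d_C) + \tau\cdot(s\tilde{e}_C) + (t\tilde{e}_A)\cdot\tau,
\]
so the single equation I must solve for $h$ is
\[
d_A h + h d_C = -\bigl(t\,\tilde{e}_A\,\tau + s\,\tau\,\tilde{e}_C\bigr).
\]

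The heart of the matter is thus to show the right-hand side above, call it $\phi\colon C\ra A$ (a degree $0$ map), is \emph{null-homotopic} as a chain map, with a specified null-homotopy $h$; I expect this to be the main obstacle. The strategy is to check that $\phi$ is chain-homotopic to $st\cdot(\text{something that is manifestly nullhomotopic via }\tau)$, or more directly: observe $\tau$ itself, viewed as a degree $-1$ chain map $C\ra A$, becomes nullhomotopic after we have the extra homotopy data, and the combination $t\tilde{e}_A\tau + s\tau\tilde{e}_C$ is exactly "$d$ applied to $\tilde{e}_A\tau\tilde{e}_C$-type terms" up to lower-order corrections. Concretely I would try $h = t\,\tilde{e}_A\,\tau\,\tilde{e}_C$ or a symmetrized variant, and compute $d_A h + h d_C$ using $d\tilde{e}_A = s\,\id_A$, $d\tilde{e}_C = t\,\id_C$, and $d_A\tau = -\tau d_C$; the Leibniz rule should collapse the expression to exactly $-\phi$ (the $st$-scalars conspire: each application of a differential to an $\tilde e$ produces the missing factor). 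If the naive guess leaves an error term, that error will itself be of the form $d_A(\cdots) + (\cdots)d_C$ and can be absorbed by adjusting $h$; since everything in sight is built from the finitely many maps $\tilde e_A,\tilde e_C,\tau$ this is a finite bookkeeping check. Once $h$ is found, $\tilde e_B$ is defined, the verification $d\tilde e_B = (st)\id_B$ is complete, and naturality of the block decomposition shows $\iota$ and $p$ are $T_{st}$-linear, finishing the proof.
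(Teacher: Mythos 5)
Your strategy is sound and arguably more transparent than the paper's.  The paper works in the $3\times 3$ grid of $\uHom$ complexes and produces the required element of $\uHom(B,B)_1$ by a diagram chase, never choosing a splitting of $B\ra C$; you instead use projectivity of $C$ to split $p$ degreewise and reduce everything to the single block equation $d_A h + h d_C = -\bigl(t\,\tilde e_A\tau + s\,\tau\tilde e_C\bigr)$.  That reduction is correct and illuminating, and if one unwinds the paper's diagram chase against a chosen splitting, the element $z=se''-x$ constructed there coincides with your $\tilde e_B$.

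There is, however, a genuine gap at the very end: the element you suggest, $h = t\,\tilde e_A\tau\tilde e_C$, does not satisfy your equation, and the claim that any error term ``will itself be of the form $d_A(\cdots)+(\cdots)d_C$ and can be absorbed'' is not justified --- there is no a priori reason the discrepancy between two degree-$0$ cycles in $\uHom(C,A)$ must be a boundary.  You should instead take
\[
h \;=\; -\,\tilde e_A\,\tau\,\tilde e_C .
\]
Using $d\tilde e_A = s\cdot\id_A$, $d\tilde e_C = t\cdot\id_C$, $d\tau = d_A\tau + \tau d_C = 0$ (which you correctly extracted from $d_B^2=0$), and the graded Leibniz rule in $\uHom$ with $|\tilde e_A|=1$, $|\tau|=-1$, $|\tilde e_C|=1$, one finds
\[
d(\tilde e_A\tau\tilde e_C) \;=\; (d\tilde e_A)\tau\tilde e_C \;-\; \tilde e_A(d\tau)\tilde e_C \;+\; \tilde e_A\tau(d\tilde e_C) \;=\; s\,\tau\tilde e_C + t\,\tilde e_A\tau ,
\]
so $d_A h + h d_C = dh = -\bigl(s\,\tau\tilde e_C + t\,\tilde e_A\tau\bigr)$, exactly as required.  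With this $h$ the lower-triangular $\tilde e_B$ you wrote down has $d\tilde e_B = st\cdot\id_B$, and the inclusion and projection are visibly $T_{st}$-linear by the block shape.  Replacing the hand-wave with this short verification closes the gap.
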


\begin{proof}
Consider the following diagram of chain complexes:
\[ \xymatrix{
\uHom(C,A) \ar[r] \ar[d] & \uHom(B,A) \ar[r] \ar[d] & \uHom(A,A)
\ar[d] \\
\uHom(C,B) \ar[r] \ar[d] & \uHom(B,B) \ar[r] \ar[d] & \uHom(A,B)
\ar[d] \\
\uHom(C,C) \ar[r] & \uHom(B,C) \ar[r] & \uHom(A,C).
}
\]
Every row is a short exact sequence, as is every column.  In the
following argument we will just write $(BC)$ as an abbreviation for
$\Hom(B,C)$, and so forth.  The argument is basically a diagram chase,
but also making use of the fact that $(BA)$ is a left-module over
$(AA)$.

We have elements $e\in (CC)_1$ such that $de=t\cdot \id$ and $f\in
(AA)_1$ such that $df=s\cdot \id$.  Our goal is to produce an element
$g\in (BB)_1$ such that $dg=st\cdot \id_B$ and such that (1) both $g$
and $se$ map to the same element of $(BC)_1$, and (2) both $g$ and $tf$
map to the same element of $(AB)_1$.  This will prove the proposition.

Let $e'\in (CB)_1$ be a preimage
for $e$, and let $e''$ be the image of $e'$ in $(BB)_1$.  Write
$a=de''$.  One sees readily that both $a$ and $t\cdot\id_B$ map to
the same element of $(BC)_0$, and so $a-t\cdot \id_B$ is the image of an
element $b\in (BA)_0$.  This $b$ must be a cycle, since
$a-t\cdot \id_B$ is.

Next, consider the element $f\cdot b\in (BA)_1$.  This satisfies
$d(fb)=sb$ by the Leibniz rule.  If $x$ is the image of $fb$ in
$(BB)_1$ then $dx=sa-st\id_B$, and so $d(se''-x)=st\cdot \id_B$.  Let
$z=se''-x$.  The last thing to do is to calculate the image of $z$ in
$(AB)$; this is the same as the image of $-x$, which by commutativity
of the upper left square is the same as the image of $-f\cdot b$ in
$(AB)$.

Let $b'$ be the image of $b$ in $(AA)$.  Both $b'$ and $-t\cdot \id_A$
map to $-t\cdot i$ under $(AA)\ra(AB)$, where $i$ is $A\inc B$.  
So $b'=-t\cdot \id_A$ by injectivity.  
The image of $f\cdot b$ in $(AA)$ is then $f\cdot b'=-tf$.
So  $z$ and $tf$ map to the same element in $(AB)$.
\end{proof}

\begin{remark}
\label{re:cone}
A common use of Proposition~\ref{pr:exact-nh} occurs when $X$
is a $T_s$-module, $Y$ is a $T_t$-module, and $f\colon X\ra Y$ is a
map of chain complexes over $R$.  Let $Cf$ be the mapping cone of $f$,
which sits in a short exact sequence $0\ra Y \ra Cf \ra \Sigma X\ra
0$.  The above proposition guarantees us a $T_{st}$-module structure
on $Cf$ so that this is an exact sequence of $T_{st}$-modules.  In
this case it is useful to be completely explicit about what the module
structure is.  To this end, write $(Cf)_n=Y_n\oplus \sigma X_{n-1}$,
where the $\sigma$ is simply a marker that will be useful in
formulas.  For example, the differential on $Cf$ is given by
$d_{Cf}(y)=dy$ for $y\in Y_n$ and $d_{Cf}(\sigma x)=f(x)-\sigma dx$ for
$x\in X_{n-1}$.  The $\sigma$-notation is also used in $(\Sigma
X)_n=\sigma X_{n-1}$, where the differential is $d(\sigma
x)=-\sigma(dx)$.  Likewise, the $T_s$-module structure on $\Sigma X$
is $e.(\sigma x)=-\sigma(e.x)$.  If one regards the symbol $\sigma$  as
having degree $1$ and satisfying the usual Koszul sign conventions,
all of the above formulas are easy to remember.

If $E$ denotes the generator for $T_{st}$, define a module
structure on $Cf$ by
\[\begin{cases} E.y=s.ey \text{ for $y\in Y_n$, and}\\
 E.(\sigma
x)=ef(ex)-t.\sigma(ex)\ \text{for $x\in X_{n-1}$}.
\end{cases}
\]
One readily checks that this is indeed a $T_{st}$-module structure, and
that it has the desired properties.

In the case where $s=t$ and $f$ is a map of $T_s$-modules we can do
slightly better.  Here one can put a $T_s$-module structure on $Cf$
via the formulas $E.y=ey$ and $E.(\sigma x)=-\sigma(ex)$.  In other
words, we do not need to pass to $T_{s^2}$ here.  
\end{remark}

\begin{remark}
Here is an observation that will be used often.
If $X$ is an
object in $\cP(T_s,n-1)$ then the cone on the identity map sits in a
short exact sequence $0\ra X \ra C(\id) \ra \Sigma X \ra 0$.  This is
an exact sequence of $T_s$-modules using the last paragraph of
Remark~\ref{re:cone}, and all the modules lie in $\cP(T_s,n)$.  We
obtain $[\Sigma X]=-[X]$ in $G(T_s,n)$.  
\end{remark}

Finally,
consider an $X$ in $\cP(T_s,n)$.  The subcomplex $0\ra X_n
\llra{d} d(X_n) \ra 0$ of $X$ (concentrated in degrees $n$ and $n-1$)
is a $T_s$-submodule, and therefore the quotient of $X$ by this
submodule has an induced
$T_s$-structure.  This quotient is the chain complex
$\tau_{\leq n-1}X$, which coincides with $X$ in degrees less than
$n-1$ and equals $X_{n-1}/d(X_n)$ in degree $n-1$.  The complex
$\tau_{\leq n-1}X$ need not be in $P(T_s,n-1)$, as there is no
guarantee that $X_{n-1}/d(X_n)$ is a projective $R$-module.

However, assume that $X$ is contractible as a chain complex.  Then $X$
admits a structure of $T_1$-module (not related to the
$T_s$-structure), and this consists of a collection of maps
$X_{i-1}\ra X_i$ corresponding to left-multiplication by $e$.  If
$x\in X_n$ then $ex=0$ and so $0=d(ex)=x-e(dx)$, hence $e.dx=x$.  So
the map $X_{n-1}\ra X_n$ is a left inverse for $d\colon X_n\ra
X_{n-1}$, and hence $X_{n-1}/d(X_n)$ is projective.  

For $P$ an $R$-projective let $D^n_s(P)$ be the complex $0\ra P
\llra{\id} P \ra 0$, concentrated in degrees $n$ and $n-1$, with
$T_s$-module structure given by the multiplication-by-$s$ map.
We have proven the following:

\begin{prop}
\label{pr:contract}
Let $X\in \cP(T_s,n)$ and assume the underlying chain complex of $X$ is contractible.  
Then there is a short exact sequence in $\cP(T_s,n)$ of the form
\[ 0 \ra D^n_s(X_n) \llra{f} X \lra \tau_{\leq n-1}X \ra 0
\]
where $f$ equals the identity in degree
$n$ and equals $d$ in degree $n-1$.
\end{prop}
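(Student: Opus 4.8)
The plan is to write down the map $f$ explicitly and verify that it is a degreewise-split monomorphism of $T_s$-modules whose cokernel is $\tau_{\leq n-1}X$. Let $\tilde e\in\uHom(X,X)_1$ be the element defining the $T_s$-structure on $X$, so that $d\tilde e+\tilde e d=s\cdot\id_X$, and recall that $D^n_s(X_n)$ has $e$ acting as multiplication-by-$s$ from its degree-$(n-1)$ copy of $X_n$ to its degree-$n$ copy. Define $f$ to be $\id_{X_n}$ in degree $n$, the differential $d\colon X_n\to X_{n-1}$ in degree $n-1$, and zero in all other degrees; this identifies $D^n_s(X_n)$ with the subcomplex of $X$ sitting in degrees $n$ and $n-1$ with terms $X_n$ and $d(X_n)$. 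That $f$ commutes with differentials is immediate from the shape of $D^n_s(X_n)$. To check $T_s$-linearity one only needs to compare $e\cdot f(v)$ and $f(e\cdot v)$ for $v$ in the degree-$(n-1)$ copy of $X_n$: the left-hand side is $\tilde e(dv)$ and the right-hand side is $sv$, so the claim reduces to the identity $\tilde e\circ d=s\cdot\id$ on $X_n$. This is exactly $d\tilde e+\tilde e d=s\cdot\id$ restricted to $X_n$, using that $\tilde e$ maps $X_n$ into $X_{n+1}=0$; here the hypothesis $X\in\cP(T_s,n)$ enters.

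Next I would check that $f$ is injective with projective cokernel. It is the identity in degree $n$, so the only issue is degree $n-1$, where $f$ is $d\colon X_n\to X_{n-1}$. Pick a contracting homotopy $h$ for the underlying complex of $X$ (available by hypothesis), so $dh+hd=\id_X$; applying this on $X_n$ and again using $h(X_n)\subseteq X_{n+1}=0$ gives $h\circ d=\id$ on $X_n$. Thus $d\colon X_n\to X_{n-1}$ is a split monomorphism, hence injective, and $d(X_n)$ is a direct summand of the finitely generated projective module $X_{n-1}$; in particular $X_{n-1}/d(X_n)$ is itself a finitely generated projective $R$-module.

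Finally I would identify the cokernel of $f$. It vanishes in degree $n$ (where $f$ is an isomorphism), equals $X_{n-1}/d(X_n)$ in degree $n-1$, and agrees with $X$ in all lower degrees. By definition this complex is $\tau_{\leq n-1}X$, which therefore lies in $\cP(R,n-1)\subseteq\cP(R,n)$; it inherits a $T_s$-structure as a quotient of $X$ by a sub-$T_s$-module, so it lies in $\cP(T_s,n)$. Hence $0\to D^n_s(X_n)\xrightarrow{f}X\to\tau_{\leq n-1}X\to 0$ is a short exact sequence in $\cP(T_s,n)$ with $f$ of the stated form, which is what we want.

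The only step requiring any care is the $T_s$-linearity of $f$: the point is to notice that the module axiom forces precisely $\tilde e\circ d=s\cdot\id_{X_n}$ and that this holds because $X$ is concentrated in degrees $\leq n$. Everything else is bookkeeping, with the one genuine use of contractibility (as opposed to mere existence of a $T_s$-structure) being the splitting $h\circ d=\id_{X_n}$, which is what makes $X_{n-1}/d(X_n)$ projective.
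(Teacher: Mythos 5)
Your proof is correct and follows essentially the same route as the paper: identify $D^n_s(X_n)$ with the sub-$T_s$-module of $X$ concentrated in degrees $n$ and $n-1$, then use the contracting homotopy (equivalently, a $T_1$-structure) to see that $d\colon X_n\to X_{n-1}$ is a split monomorphism so that the cokernel $\tau_{\leq n-1}X$ has a projective top term. You are somewhat more explicit than the paper in verifying the $T_s$-linearity of $f$ via $\tilde e\circ d = s\cdot\id_{X_n}$, but this is exactly the computation implicit in the paper's claim that the subcomplex is a $T_s$-submodule isomorphic to $D^n_s(X_n)$.
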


One may continue with the above process, producing a similar exact
sequence with $\tau_{\leq n-1}X$ as the middle term and then
proceeding inductively.  So every contractible $T_s$-module may be
built up via extensions from the complexes $D^k_s(P)$.  This proves the following:

\begin{cor}
\label{co:contract}
If Definition~\ref{de:G(Ts)} is changed so that relation (2) is only imposed
for modules of the form $D_s^k(P)$, $1\leq k\leq n$ and $P$ a
finitely-generated $R$-projective, then this
yields the same group $G(T_s,n)$.  
\end{cor}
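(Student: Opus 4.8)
The plan is to show that the relation "$[X] = 0$ whenever $H_*(X) = 0$'' is a formal consequence of the seemingly weaker relation "$[D^k_s(P)] = 0$,'' given that relation (1) (additivity over short exact sequences) is retained in both definitions. Let $G'(T_s,n)$ denote the group defined with the weaker relation, so there is a surjection $G'(T_s,n) \twoheadrightarrow G(T_s,n)$; I must produce an inverse, or equivalently show that $[X] = 0$ already holds in $G'(T_s,n)$ for every contractible $X \in \cP(T_s,n)$. (Note the underlying chain complex of $X$ being contractible is exactly the condition $H_*(X) = 0$, since a bounded-below complex of projectives is acyclic iff it is contractible.)

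First I would recall the short exact sequence produced by Proposition~\ref{pr:contract}: for contractible $X$,
\[
0 \ra D^n_s(X_n) \lra X \lra \tau_{\leq n-1}X \ra 0,
\]
an exact sequence in $\cP(T_s,n)$. By relation (1) this gives $[X] = [D^n_s(X_n)] + [\tau_{\leq n-1}X]$ in $G'(T_s,n)$, and the first term vanishes by the weak relation. Now $\tau_{\leq n-1}X$ is again contractible (it is a quotient of the contractible $X$ by a contractible subcomplex $D^n_s(X_n)$, and the long exact sequence in homology — or the explicit homotopy — shows acyclicity; equivalently, as noted in the text preceding Proposition~\ref{pr:contract}, the contracting homotopy $e$ on $X$ descends), and by the remark preceding that proposition it lies in $\cP(T_s, n-1) \subseteq \cP(T_s,n)$. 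So I would induct downward on the top degree: repeating the splitting produces $[X] = [\tau_{\leq n-1}X] = [\tau_{\leq n-2}X] = \cdots$, and the process terminates with a contractible complex concentrated in degrees $\{0\}$ or $\{0,1\}$. A contractible complex concentrated in degree $0$ is the zero complex (its only homology group would be $X_0$ itself), so $[X_0\text{ in degree }0] = 0$ trivially; a contractible complex concentrated in degrees $\{0,1\}$ must be $0 \ra P \llra{\cong} P \ra 0$ with $d$ an isomorphism, but one must check the $T_s$-module structure matches $D^1_s(P)$ — here $de = s\cdot\id$ forces, after the change of basis on $X_0$ identifying it with $X_1$ via $d$, the map $e$ to be multiplication by $s$, so this complex is isomorphic (as a $T_s$-module) to $D^1_s(P)$ and hence has class $0$.

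The main obstacle, and the only point requiring genuine care, is the base case: verifying that a contractible $X \in \cP(T_s,1)$ is isomorphic as a $T_s$-module to $D^1_s(X_1)$, not merely as a chain complex. The isomorphism $d\colon X_1 \xrightarrow{\cong} X_0$ identifies the underlying complexes; one then checks that under this identification the generator $e$ of $T_s$ acts as $s\cdot\id$. Indeed $e\colon X_0 \ra X_1$ satisfies $d(ey) = s\cdot y - e(dy) = s\cdot y$ for $y \in X_0$ (since $dy = 0$), so $d\circ e = s\cdot\id_{X_0}$, which is precisely the statement that $e$ corresponds to multiplication by $s$ after transporting along $d$; and $e$ acts as $0$ in degree $1$ for degree reasons. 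Alternatively, one can sidestep the base case entirely by invoking Proposition~\ref{pr:contract} one final time with $n = 1$: its conclusion gives $0 \ra D^1_s(X_1) \ra X \ra \tau_{\leq 0}X \ra 0$ with $\tau_{\leq 0}X = X_0/d(X_1) = 0$, so $X \iso D^1_s(X_1)$ directly. This is cleaner, so I would phrase the induction to run all the way down to $n=1$ and close it with that observation, giving $[X] = 0$ in $G'(T_s,n)$ and hence $G'(T_s,n) \iso G(T_s,n)$.
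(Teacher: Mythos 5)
Your proof is correct and follows the paper's own argument: iterate Proposition~\ref{pr:contract} to exhibit any contractible object of $\cP(T_s,n)$ as built up via short exact sequences from the complexes $D^k_s(P)$, so that relation (1) together with the weak form of relation (2) forces $[X]=0$. The paper states this more tersely but the induction and base case you supply are exactly what is meant, and your attention to the $T_s$-module structure in the degree-one case and to the projectivity of $X_{n-1}/d(X_n)$ addresses the only genuinely nontrivial points.
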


\begin{remark}[Duality]
\label{re:duality}
If $X$ is a $T_s$-module then $\Hom(X,R)$ has an induced
$T_s$-structure: one simply turns all the arrows around.  
This gives a duality functor on each of the categories
$\cP(T_s,n)$.  It is useful to remember that every construction in
this paper has a dual version.  For example, this applies to
Proposition~\ref{pr:contract} above.  
\end{remark}


\section{The construction of the $\cW$ maps}

In this section we will construct the maps $\cW\colon G(T_s,n) \ra
G(T_{s^2},n-1)$ for $n\geq 2$ and prove Proposition~\ref{pr:main2}.

\medskip

For $n\geq 2$ we describe a functor $\Gamma\colon \cP(T_s,n)\ra
\cP(T_{s^2},n-1)$.  For $X$ in $\cP(T_s,n)$ this functor takes the top
degree $X_n$ and ``folds'' it down into degree $n-2$, interchanging the
$d$ and $e$ operators on this piece.  This doesn't quite yield a
$T_s$-module, but a little fiddling yields a $T_{s^2}$-structure. For
$s=1$ this construction appears in \cite[2.10]{D}, and a related
construct is in
\cite[proof of Lemma 2.6.9]{A}.  

To avoid some confusion let us  write $e$ for the polynomial generator of $T_s$
and $E$ for the corresponding generator of $T_{s^2}$.  
If $X\in \cP(T_s,n)$ define
\[  (\Gamma X)_i=\begin{cases}
X_i & \text{if $0\leq i\leq n-3$ or $i=n-1$,} \\
X_{n-2}\oplus X_n & \text{if $i=n-2$},\\
0 & \text{otherwise.}
\end{cases}
\]
The differential $(\Gamma X)_i \ra (\Gamma X)_{i-1}$ coincides with
the one on $X$ (denoted $d_X$) for $i\leq n-3$.
For $(a,b)\in X_{n-2}\oplus X_n=(\Gamma X)_{n-2}$ we set
$d(a,b)=d_Xa$, and for $a\in X_{n-1}=(\Gamma X)_{n-1}$ we set
$da=(d_Xa,ea)$.  This clearly makes $\Gamma X$ into a chain complex of
finitely-generated, projective $R$-modules.  To complete the
construction we need to describe the $T_{s^2}$-module structure.  For
$a\in (\Gamma X)_i$ set
\[ E.a=\begin{cases}
s(e.a) & \text{if $0\leq i\leq n-3$}, \\
s(e.a)-e^2.da & \text{if $a\in X_{n-2}\subseteq (\Gamma X)_{n-2}$},\\
s.da & \text{if $a\in X_{n}\subseteq (\Gamma X)_{n-2}$} \\
0 & \text{otherwise.}
\end{cases}
\]
It is routine, although slightly tedious, to check that this really is
the structure of a $T_{s^2}$-module (for $u\in X_{n-2}$ use that
$0=d(e^3u)=se^2u-e^3du$).  The fact that $\Gamma$ is a functor is then
self-evident.  Moreover, it is clearly an exact functor: if $0\ra
X'\ra X \ra X''\ra 0$ is a short exact sequence of $T_s$-modules then
$0\ra \Gamma X' \ra \Gamma X \ra \Gamma X''\ra 0$ is still short
exact.  Finally, we note that if $i\colon \cP(T_s,n-1)
\ra \cP(T_s,n)$ is the evident inclusion then the composition
\[ \cP(T_s,n-1) \llra{i} \cP(T_s,n) \llra{\Gamma} \cP(T_{s^2},n) 
\]
is just the restriction-of-scalars functor $j$.

Write $K(s)$ for the Koszul complex $R\llra{s} R$, concentrated in
degrees $0$ and $1$.  It has an evident structure of $T_s$-module, since
$K(s)$ is just $T_s/(e^2)$.  Let us record the following simple calculation:

\begin{lemma}\label{le:gamma1}  
If $P$ is an $R$-module then
$\Gamma(D^n_sP)\iso \Sigma^{n-2}(K(s)\tens_R P)$ (as $T_{s^2}$-modules), for $n\geq 2$.  
\end{lemma}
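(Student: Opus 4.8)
The plan is to unwind the definition of $\Gamma$ applied to $D^n_s(P)$ and simply read off the chain complex and module structure, then match it against $\Sigma^{n-2}(K(s)\otimes_R P)$. Recall $D^n_s(P)$ is the complex $0\to P\xrightarrow{\id} P\to 0$ concentrated in degrees $n$ and $n-1$, with $e$ acting as multiplication by $s$ (i.e.\ $e\colon (D^n_sP)_{n-1}\to (D^n_sP)_n$ is $s\cdot\id$). The only nonzero modules are in degrees $n$ and $n-1$, so in $\Gamma(D^n_sP)$ the pieces $X_i$ for $i\le n-3$ vanish, the degree-$(n-1)$ piece is $X_{n-1}=P$, and the degree-$(n-2)$ piece is $X_{n-2}\oplus X_n=0\oplus P=P$. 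Everything else is zero, so $\Gamma(D^n_sP)$ is concentrated in degrees $n-1$ and $n-2$.

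Next I would compute the differential and the $E$-action on these two surviving degrees. For $a\in X_{n-1}=P$ the formula gives $da=(d_Xa, ea)=(0, sa)\in X_{n-2}\oplus X_n$; under the identification of that group with $P$ (via the second summand) this is multiplication by $s$. So the underlying complex of $\Gamma(D^n_sP)$ is $P\xrightarrow{s}P$ sitting in degrees $n-1$ and $n-2$, which is exactly $\Sigma^{n-2}K(s)$ tensored with $P$ — matching the claimed answer on the level of chain complexes. For the module structure: on $X_{n-1}$ (degree $n-1$, which is ``otherwise'' since $n-1\notin[0,n-3]$ and it is not one of the two special summands in degree $n-2$), the fourth case gives $E.a=0$; wait — I should double-check this against the case list, but in any event $X_{n-1}$ is not in the range $[0,n-3]$ for $n\ge 2$, so $E$ acts there by the ``otherwise'' clause, namely as $0$... actually the element $a\in X_{n-1}$ has $da\in X_n\subseteq(\Gamma X)_{n-2}$, and one should track whether $E$ on degree $n-1$ is covered; re-reading the definition, $E$ on $(\Gamma X)_{n-1}$ falls under ``otherwise'' only if $n-1>n-3$, i.e.\ always, so $E.a=0$ there. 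For $a\in X_n\subseteq(\Gamma X)_{n-2}$ the third case gives $E.a=s.d_Xa = s\cdot a$ (since $d_X=\id$ on $D^n_sP$), i.e.\ $E$ acts as multiplication by $s$ on that summand. This is precisely the $T_{s^2}$-structure on $\Sigma^{n-2}(K(s)\otimes P)$: on $K(s)=T_s/(e^2)$ the generator $e$ acts by $s$ from degree $0$ to degree $1$ and by $0$ from degree $1$ to degree $2$, and passing to $T_{s^2}$ via $E\mapsto e^2$... hmm, but $T_{s^2}\to T_s$ sends $E\mapsto se$? No: the map $T_{st}\to T_s$ sends $e\mapsto te$, so $T_{s^2}\to T_s$ sends $E\mapsto se$, hence on a $T_s$-module $E$ acts as $s\cdot e$; on $K(s)\otimes P$ that is multiplication by $s\cdot s=s^2$ from degree $0$ to $1$ and $0$ from degree $1$ to $2$. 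I will need to reconcile the factor of $s$ versus $s^2$ here — the honest check is that both sides, as $T_{s^2}$-modules, have $E$ acting from the bottom degree to the next as multiplication by $s^2$ (on $\Gamma(D^n_sP)$: $E$ from $X_{n-1}$ to $X_n\subseteq(\Gamma X)_{n-2}$ is the composite that works out to $s^2$ once one accounts correctly for how $E$ interacts with $d$), so the two module structures agree.

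The main obstacle is bookkeeping: there are two different degree shifts ($n-2$ versus $n-1$), the ``folded'' degree-$(n-2)$ piece is a direct sum one of whose summands is zero, and the $E$-action is defined by a four-case formula whose middle cases involve $d$, so I must be careful to feed each surviving element into the correct case and to correctly identify $\Sigma^{n-2}(K(s)\otimes_R P)$'s own $T_{s^2}$-structure (obtained by restriction along $T_{s^2}\to T_s$, $E\mapsto se$, together with the suspension sign conventions from Remark~\ref{re:cone}). Once the dictionary between the two descriptions is set up carefully, the isomorphism is the evident identity-on-$P$ map in each degree, and checking it commutes with $d$ and $E$ is the short verification sketched above. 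I expect no genuine difficulty beyond this — the lemma is labelled ``simple calculation'' precisely because, after $\Gamma$ collapses almost everything, only the degree-$(n-1)$ and degree-$(n-2)$ data survive and they assemble transparently into a shifted Koszul complex.
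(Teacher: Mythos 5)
Your computation of $\Gamma(D^n_sP)$ itself is correct: the complex is concentrated in degrees $n-1$ and $n-2$, the differential is multiplication by $s$, and the third clause of the definition of the $E$-action gives $E\cdot b = s\cdot d_X b = sb$ on the summand $X_n\subseteq(\Gamma X)_{n-2}$ (note this raises degree, sending $(\Gamma X)_{n-2}\to(\Gamma X)_{n-1}$, not the direction you wrote). The error enters when you identify the $T_{s^2}$-structure on the target. On $K(s)=T_s/(e^2)$ the generator $e$ acts by the \emph{identity} from degree $0$ to degree $1$ (left multiplication by $e$ sends $1\mapsto e$, and under the identification $K(s)_1\cong R$ via $e\leftrightarrow 1$ this is $\operatorname{id}$); it is the \emph{differential} $d$, not $e$, that is multiplication by $s$. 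You wrote that $e$ acts by $s$, which then led you to compute $E=se$ acting by $s^2$, and from there you manufactured a fictitious $s$-versus-$s^2$ discrepancy.

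There is in fact no discrepancy to reconcile. With $e$ acting by $1$ on $K(s)$, restriction along $T_{s^2}\to T_s$, $E\mapsto se$, makes $E$ act by $s$ from degree $0$ to degree $1$ of $K(s)\otimes_R P$, which matches your (correct) computation that $E$ acts by $s$ on $\Gamma(D^n_sP)$. The ``honest check'' you appealed to --- that both sides have $E$ acting by $s^2$ --- is false; what is true is that $dE+Ed=s^2\cdot\operatorname{id}$, and you may have conflated the Leibniz identity with the $E$-action itself. Once the $e$-action on $K(s)$ is corrected, the degreewise identity map (with a sign $(-1)^{n-2}$ in one degree to absorb the suspension sign on $d$ and $e$) gives the claimed isomorphism of $T_{s^2}$-modules, and the verification you sketched becomes a genuinely two-line check rather than a loose end.
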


\begin{remark}[A better approach to $\Gamma X$]
The definition of $\Gamma$ seems to have come out of thin air.  To
understand it better, notice that
for any $X\in \cP(T_s,n)$  there is a natural chain map
$f\colon X\ra \Sigma^{n-1}(K(s)\tens_R X_n)$.  It equals the identity in
degree $n$ and is given (up to sign) by left-multiplication by $e$ in degree
$n-1$.  Note that this is not a map of $T_s$-modules.  However, let
$Cf$ be the mapping cone of $f$.  By Remark~\ref{re:cone} there is a canonical
structure of $T_{s^2}$-module on $Cf$ such that
\begin{myequation}
\label{eq:ex1}
 0 \ra \Sigma^{n-1}(K(s)\tens_R X_n) \ra Cf \ra \Sigma X \ra 0
\end{myequation}
is an exact sequence of $T_{s^2}$-modules. 

Note that $(Cf)_{n+1}=X_n$, and consider the canonical map of $T_{s^2}$-modules
$D^{n+1}_{{s^2}}(X_n) \ra Cf$.  This is an inclusion (because $f$
was the identity in degree $n$) and the image is a
$T_{s^2}$-subcomplex, so we may consider the quotient module.  An
easy calculation shows that this quotient is isomorphic to $\Sigma(\Gamma X)$:
\begin{myequation}
\label{eq:ex2}
 0 \ra D^{n+1}_{s^2}(X_n) \ra Cf \ra \Sigma(\Gamma X) \ra 0.
\end{myequation}

Each of the complexes in both (\ref{eq:ex1}) and (\ref{eq:ex2}) are
 concentrated in degrees $1$ through $n+1$; so if we desuspend
 everything we may regard these as exact sequences in
 $\cP(T_{s^2},n)$.
Since $D^{n+1}_{s^2}(X_n)$ is contractible we have
$[\Sigma^{-1}Cf]=[\Gamma X]$ in $G(T_{s^2},n)$, and (\ref{eq:ex1})
then gives the identity
\begin{myequation}
\label{eq:ex3}
\qquad\quad [X]=[\Sigma^{-1}Cf]-[\Sigma^{n-2}K(s)\tens_R X_n]=[\Gamma
X]+(-1)^{n-1}[K(s)\tens_R X_n]
\end{myequation}
in $G(T_{s^2},n)$.  
\end{remark}

We are ready to construct the $\cW$ maps and prove they have the
desired properties:

\begin{proof}[Proof of Proposition~\ref{pr:main2}]
For every $X\in \cP(T_s,n)$ let
\[ \cW_n(X)=[\Gamma X] - [\Sigma^{n-2}K(s)\tens_R X_n] \in G(T_{s^2},n-1).\]
This assignment is additive since both terms in the definition of $\cW_n(X)$ are
additive. Using Corollary~\ref{co:contract}, the assignment will descend to the
Grothendieck group if we verify $\cW_n(X)=0$ for $X=D^k_s(P)$, where $1\leq
k\leq n$ and $P$ is a finitely-generated projective.  If $k<n$ this is
trivial because then $\Gamma X=X$ (which is contractible) and
$X_n=0$.  For $k=n$ the claim follows from Lemma~\ref{le:gamma1}.

So $\cW_n$ induces
a group homomorphism $G(T_s,n)\ra G(T_{s^2},n-1)$, which we will also call
$\cW_n$.
The identity $\cW_n i=j$ is an immediate consequence of $\Gamma i=j$,
and
 $i\cW_n=j$ is simply (\ref{eq:ex3}).
\end{proof}

\section{Proof of the colimit result}

In this section we prove Proposition~\ref{pr:main1}.

\begin{lemma}
\label{le:colim1}
Let $P_\sbt\in \cP(T_s,n)$, and suppose that there is another
$T_s$-module structure on the same underlying chain complex of
$R$-modules.  Denote the two $T_s$-modules as $P^1_\sbt$ and
$P^2_\sbt$. 
Then the images of $[P^1_\sbt]$ and $[P^2_\sbt]$ in $\colim_s
G(T_s,n)$ are the same.
\end{lemma}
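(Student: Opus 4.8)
The plan is to interpolate between the two $T_s$-module structures $P^1_\sbt$ and $P^2_\sbt$ by building a single module over a larger DGA, after passing to a sufficiently divisible element of $S$. Concretely, write $\tilde e_1, \tilde e_2 \in \uHom(P_\sbt,P_\sbt)_1$ for the two degree-one operators giving the respective $T_s$-structures, so that $d\tilde e_1 = d\tilde e_2 = s\cdot\id$. Then $\tilde e_1 - \tilde e_2$ is a cycle in $\uHom(P_\sbt,P_\sbt)_1$; since $S^{-1}\uHom(P_\sbt,P_\sbt)$ is exact (this is the argument already used in the proof of Proposition~\ref{pr:Ts-existence}), after multiplying by some $u\in S$ we get that $u(\tilde e_1-\tilde e_2)$ is a boundary, hence in particular $\tilde e_1' := us\cdot\tilde e_1$ and $\tilde e_2' := us\cdot \tilde e_2$ — which are the operators for the restricted $T_{us^2}$-structures $j(P^i_\sbt)$ — are homotopic as chain maps $P_\sbt \to P_\sbt$. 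But homotopic chain maps need not agree, so a more structural device is needed.

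The cleaner approach I would actually carry out: form the mapping cylinder. For $i=1,2$ let $M^i$ be the $T_s$-module $P^i_\sbt$. The identity chain map $\id\colon P_\sbt \to P_\sbt$ is generally \emph{not} a map $M^1\to M^2$, but — reasoning as above — after replacing $s$ by $s' = us^2$ for suitable $u\in S$, the map $s'\cdot\id \colon j(M^1)\to j(M^2)$ \emph{is} a map of $T_{s'}$-modules up to the chosen homotopy, and in fact one can arrange a genuine $T_{s'}$-module structure on the mapping cylinder $\mathrm{Cyl}$ of an honest $T_{s'}$-module map. The point is to produce a $T_{s'}$-module $N$ together with two short exact sequences of $T_{s'}$-modules
\[
0 \to K_1 \to N \to j(M^1) \to 0, \qquad 0 \to K_2 \to N \to j(M^2) \to 0,
\]
where $K_1, K_2$ are contractible chain complexes (summands of the form $D^k_{s'}(-)$, via Proposition~\ref{pr:contract} applied to the acyclic kernels). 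All of $N, K_1, K_2$ lie in $\cP(T_{s'}, n)$ if $P_\sbt \in \cP(T_s,n)$, because the cylinder construction adds one degree but the kernels are the degenerate pieces. Granting this, in $G(T_{s'},n)$ we get $[j(M^1)] = [N] = [j(M^2)]$, hence the images of $[M^1]$ and $[M^2]$ in $\colim_s G(T_s,n)$ coincide.

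Here is the concrete mechanism for producing $N$, which is really the $T_s$-analogue of the usual mapping cylinder. Using the explicit cone formulas of Remark~\ref{re:cone}: since $j(M^1)$ and $j(M^2)$ become connected by a $T_{s'}$-module map $\phi\colon j(M^1)\to j(M^2)$ (this is the step requiring the passage from $s$ to $s'=us^2$ — it is exactly the calculation in Proposition~\ref{pr:Ts-existence} that makes a localized identity into an honest map after clearing a denominator), take $N$ to be the mapping cone of $(\phi, -\id)\colon j(M^1) \to j(M^2)\oplus C(\id_{j(M^1)})$, or more simply the standard cylinder $\mathrm{Cyl}(\phi)$ sitting in $0\to j(M^1)\to \mathrm{Cyl}(\phi)\to C(\phi)\to 0$ and $0\to C(\id)\to \mathrm{Cyl}(\phi)\to j(M^2)\to 0$; when $\phi$ is (a scalar multiple of) the identity on underlying complexes, $C(\phi)$ is contractible, and $C(\id)$ is visibly built from $D^k_{s'}$'s. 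The $T_{s'}$-structures on cylinder and cones are supplied by Remark~\ref{re:cone}. The main obstacle, and the only genuinely delicate point, is verifying that $\phi$ can be promoted to an actual $T_{s'}$-module map (not merely a homotopy-commuting one) after clearing denominators in $S^{-1}\uHom(P_\sbt,P_\sbt)$ — i.e., that the degree-one witness making $\tilde e_1'$ and $\tilde e_2'$ homotopic can itself be chosen defined over $R$ after multiplying by an element of $S$ — but this is the same finite-presentation/localization argument already deployed twice in Section~\ref{se:prelim}, so no new idea is needed, only bookkeeping of which power of $s$ one lands in.
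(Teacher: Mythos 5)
The core obstruction in your argument is the step you yourself flag as ``genuinely delicate'': producing an honest $T_{s'}$-module map $\phi\colon j(M^1)\to j(M^2)$. Clearing denominators gives you an $h\in\uHom(P_\sbt,P_\sbt)_2$ with $dh=u(\tilde e_1-\tilde e_2)$, but that is only a degree-$2$ chain homotopy between the two degree-$1$ operators; it does not produce a degree-$0$ chain map $\phi$ satisfying the strict intertwining identity $\phi\,\tilde e_1=\tilde e_2\,\phi$ (even after rescaling both structures by powers of $s$). A scalar multiple of the identity visibly fails --- $\lambda(\tilde e_1-\tilde e_2)$ is a boundary, not zero --- and there is no evident correction term: one would be trying to strictify a homotopy equivalence of $T_{s'}$-modules to a strict module map, which is a different kind of problem from the finite-presentation/localization argument in Proposition~\ref{pr:Ts-existence}, and in general such a strict map need not exist. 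So the promotion of $\phi$ to a $T_{s'}$-module map is a real gap, not bookkeeping.

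The paper sidesteps exactly this issue. Instead of a cylinder over a module map, it applies Proposition~\ref{pr:exact-nh} to the short exact sequence $0\to P^1_\sbt\to C(\id)\to\Sigma P^2_\sbt\to 0$: this proposition only requires a short exact sequence of \emph{chain complexes} whose outer terms carry $T_s$-structures, and it then manufactures a $T_{s^2}$-structure on the middle term with no need for any compatibility between the two given structures. Since $C(\id)$ is contractible, one reads off $[P^1]=-[\Sigma P^2]=[P^2]$ in $G(T_{s^2},n+1)$, and then applies $\cW$ (Proposition~\ref{pr:main2}) to descend to length $n$ over $T_{s^4}$. Two further things in your writeup would also need fixing even if $\phi$ existed: the cylinder and cones live in $\cP(T_{s'},n+1)$, not $\cP(T_{s'},n)$, so your conclusion lands one degree too high and you would still need the $\cW$ map to come back down; and the two short exact sequences of a cylinder are $0\to A\to\mathrm{Cyl}(\phi)\to C(\phi)\to 0$ and $0\to C(\id_A)\to\mathrm{Cyl}(\phi)\to B\to 0$, which is what your parenthetical gives but not what your displayed sequences assert.
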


\begin{proof}
Start by considering the short
exact sequence $0\ra P_\sbt \ra C(\id)\ra \Sigma P_\sbt \ra 0$ where
the middle term is the cone on the identity map.  The second two
complexes in the sequence have length $n+1$.  Equip the left term 
$P_\sbt$ with the first $T_s$-module structure, and equip the right
term $\Sigma P_\sbt$ with the second $T_s$-module structure
(corresponding to $\Sigma P^2_\sbt$).  By
Proposition~\ref{pr:exact-nh} there is a $T_{s^2}$-module stucture on
$C(\id)$ such that the above is an exact sequence of
$T_{s^2}$-modules.  Since $C(\id)$ is contractible we therefore have
\[ [P_\sbt^1]=-[\Sigma P^2_\sbt]=[P^2_\sbt] \]
in $G(T_{s^2},n+1)$.  It is perhaps better to write this as $j_{s^2\la
s}i_s[P_\sbt^1]=j_{s^2\la s}i_s[P_\sbt^2]$, where $i_s$ is the map
$G(T_s,n) \ra G(T_s,n+1)$.
By commuting the $j$ and $i$ we then get $i_{s^2}j_{s^2\la
s}[P^1_\sbt]=i_{s^2}j_{s^2\la s}[P^2_\sbt]$.  

Now apply the map $\cW_{s^2}\colon G(T_{s^2},n+1) \ra G(T_{s^4},n)$ to
get 
\[ \cW_{s^2} i_{s^2}j_{s^2\la s} [P^1_\sbt]=
\cW_{s^2} i_{s^2}j_{s^2\la s} [P^2_\sbt].
\]
By Proposition~\ref{pr:main2} one has $\cW_{s^2}i_{s^2}=j_{s^4\la
s^2}$, and so the above identity simply says
$j_{s^4\la s}[P^1_\sbt]=j_{s^4\la s}[P^2_\sbt]$.
Consequently,  $[P^1_\sbt]=[P^2_\sbt]$ in $\colim_s
G(T_s,n)$.  
\end{proof}

\begin{proof}[Proof of Proposition~\ref{pr:main1}]
We must prove that the map $j\colon \colim_s G(T_s,n) \ra G(R\on S,n)$ is an
isomorphism, and we will do this by constructing an inverse. 
Let $P_\sbt$ be a  bounded complex of finitely-generated projectives that is
$S$-exact.  Then by Proposition~\ref{pr:Ts-existence} $P_\sbt$ may be
given the structure of $T_s$-module, for some
$s\in S$.  By Lemma~\ref{le:colim1} the class of this $T_s$-module in
$\colim_s G(T_s,n)$ is independent of the choice of $T_s$-module
structure: write this class as $\{P\}$.  

We must show that the assignment $P_\sbt \mapsto \{P\}$ is additive
and sends contractible complexes to zero.  The latter is trivial.  For
the former, let $0\ra P_\sbt' \ra P_\sbt \ra P''_\sbt \ra 0$ be an
exact sequence in $\cP(R\on S,n)$.  Choose a $T_s$-module structure on
$P_\sbt'$ and on $P''_\sbt$.  By Proposition~\ref{pr:exact-nh} there is a
$T_{s^2}$-module structure on $P_\sbt$ making the above an exact
sequence of $T_{s^2}$-modules.  So we have
$[P_\sbt]=[P'_\sbt]+[P''_\sbt]$ in $G(T_{s^2},n)$, and this
immediately yields $\{P\}=\{P'\}+\{P''\}$.  So we have constructed a
map $\lambda\colon G(R\on S,n)\ra \colim_s G(T_s,n)$.  The equation
$j\lambda =\id$ is obvious, and the equation $\lambda j=\id$ is
immediate from the fact that the definition of $\lambda$ did not
depend on the choice of $T_s$-module structure.
\end{proof}


\section{Proof of the general Foxby-Halvorsen result}
\label{se:general}

Fix $d\geq 1$ and let $\us\in R^n$.  Define $T_\us$ to be the free DGA
over $R$ generated by elements $e_1,\ldots,e_d$ of degree $1$
satisfying $d(e_i)=s_i$.  As an algebra $T_\us$ is the free (non-commutative)
$R$-algebra on $d$ variables.    Note that to give a $T_\us$-module
structure on a chain complex $C_\sbt$ is the same as giving
null-homotopies for the multiplication-by-$s_i$ maps with $1\leq
i\leq d$.  Because the null-homotopies are independent of each other,
the analogs of the results in Section~\ref{se:prelim} are all
straightforward consequences of what we have already proven.

If $\us,\ut\in R^n$ write $\us\ut$ for the tuple whose $i$th
element is $s_i\cdot t_i$.  There are canonical DGA maps $T_{\us\ut}
\ra T_{\us}$ sending $e_i\mapsto t_ie_i$.  If $S_1,\ldots,S_d$ are
multiplicative systems then these maps assemble to give a diagram of
DGAs $(tS_1\times \cdots \times tS_d)^{op} \ra \DGA$.  
Note that $tS_1\times \cdots \times tS_d$
 is  a product of filtered categories, hence filtered.  

Let $\cP(T_\us,n)$ and $G(T_\us,n)$ be defined in the evident way,
generalizing our notation from Section~\ref{se:outline}.  We obtain a
diagram of abelian groups $tS_1\times \cdots \times tS_d \times \N \ra
\Ab$,
with $(s_1,\ldots,s_d,n)\mapsto G(T_\us,n)$.  We again use $j$ for any
map in this diagram between objects with $n$ fixed, and $i$ for any
map between objects with $\us$ fixed.  

\begin{thm}\label{th:general}\mbox{}\par
\begin{enumerate}[(a)]
\item For $n\geq d+1$ 
there are maps $\cW\colon G(T_\us,n)\ra G(T_{\us^2},n-1)$ satisfying
$\cW i=j$ and $i\cW=j$.  
\item For $n\geq d$ the maps $\colim_{\us} G(T_\us,n) \ra G(R\on \und{S},n)$ are
isomorphisms.
\item The maps $G(R\on \und{S},n)\ra G(R\on \und{S},n+1)$ are
isomorphisms, for all $n\geq d$.  
\end{enumerate}
\end{thm}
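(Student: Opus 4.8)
The plan is to mimic, almost verbatim, the $d=1$ development already carried out in Sections 3 and 4, since the excerpt explicitly advertises that the preliminary results generalize without difficulty. First I would establish part (a): for each $i$ with $1\le i\le d$ apply the one-variable construction $\Gamma$ of Section 3 to the $e_i$-module structure alone, folding the top degree down into degree $n-2$ using the $d$/$e_i$ swap, while leaving the other $e_j$ operators to ride along; the analog of Lemma~\ref{le:gamma1} computes $\Gamma(D^n_{\us}P)$ as a suspended Koszul-type complex, and one sets
\[ \cW_n(X)=[\Gamma X]-[\Sigma^{n-2}K(s_i)\tens_R X_n]\in G(T_{\us^2},n-1). \]
(Any fixed choice of $i$ works; the point is merely to produce a single map with the two required properties.) The identities $\cW i=j$ and $i\cW=j$ follow exactly as before, the latter from the mapping-cone exact sequences (\ref{eq:ex1})--(\ref{eq:ex3}) reinterpreted in $\cP(T_{\us^2},n)$. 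The hypothesis $n\ge d+1$ enters because, with $d$ independent null-homotopies present, one needs the complex to have length at least $d+1$ for Proposition~\ref{pr:exact-nh}'s multivariable analog (gluing null-homotopies across a short exact sequence, one factor of the index doubling per variable) to stay inside the allowed range — this is the multivariable bookkeeping that replaces ``$n\ge 2$''.

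Next I would prove part (b) by copying the proof of Proposition~\ref{pr:main1}. The multivariable version of Proposition~\ref{pr:Ts-existence} gives, for an $\und{S}$-exact $P_\sbt$, a tuple $\us$ and a $T_\us$-module structure (apply the one-variable existence result to each $s_i$ in turn, absorbing the resulting scalars). The multivariable Lemma~\ref{le:colim1} — whose proof is the same cone-on-the-identity argument, now using part (a) in place of Proposition~\ref{pr:main2} — shows the class $\{P\}\in\colim_{\us}G(T_\us,n)$ is independent of the chosen structure. Additivity of $P_\sbt\mapsto\{P\}$ uses the multivariable Proposition~\ref{pr:exact-nh} to realize a short exact sequence over some $T_{\us^2}$, and contractible complexes go to zero trivially; this defines an inverse $\lambda$ to $j$, and $j\lambda=\id$, $\lambda j=\id$ are formal. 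The constraint $n\ge d$ is exactly the range in which the existence result and Lemma~\ref{le:colim1} (the latter needing part (a), hence length $\ge d+1$ after one suspension, i.e.\ $n\ge d$ before) both apply.

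Finally part (c) is the formal deduction already spelled out in the proof of Corollary~\ref{co:maincor}, now reading ``$\us$'' for ``$s$'', ``$\us^2$'' for ``$s^2$'', and invoking parts (a) and (b) of the present theorem instead of Propositions~\ref{pr:main1} and~\ref{pr:main2}: given $\alpha\in G(R\on\und{S},n+1)$ with $n\ge d$, lift it via (b) to some $\alpha'\in G(T_\us,n+1)$, then $\cW_{n+1}(\alpha')$ maps to $\alpha$ since $i\cW=j$, giving surjectivity; injectivity is the symmetric argument using $\cW i=j$. The statement that $G(R\on\und{S},n)\to G(R\on\und{S},\infty)$ is an isomorphism then follows because $G(R\on\und{S},\infty)=\colim_n G(R\on\und{S},n)$.

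I expect the main obstacle to be getting the index-doubling bookkeeping right in the multivariable analog of Proposition~\ref{pr:exact-nh} and tracking how it forces the bound $n\ge d+1$ in part (a) and $n\ge d$ in part (b) — everything else is a faithful transcription of the $d=1$ arguments, and the paper has already promised that the Section~\ref{se:prelim} machinery carries over; so the only genuine content is checking that the numerical thresholds propagate correctly through $\cW$, the cone exact sequences, and Lemma~\ref{le:colim1}.
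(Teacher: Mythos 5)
Your parts (b) and (c) are faithful transcriptions of the $d=1$ arguments and would go through once part (a) is in hand. The problem is your part (a): the one--variable folding simply does not produce a $T_{\us^2}$-module, and the paper even flags this. You propose to fix one index $i$, swap the $d$/$e_i$ operators to fold $X_n$ into degree $n-2$, and ``let the other $e_j$ operators ride along.'' But there is nothing for the other operators to ride along on. Concretely, after your fold, degree $n-2$ of $\Gamma X$ is $X_{n-2}\oplus X_n$ and degree $n-1$ is $X_{n-1}$, with the new differential sending $X_{n-1}\ni a\mapsto(d_Xa,e_ia)$. For $j\neq i$, you must define $E_j\colon X_{n-2}\oplus X_n\ra X_{n-1}$ satisfying $d_\Gamma E_j+E_jd_\Gamma=s_j^2$. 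Unwinding the relation on a pair $(a,b)$, the $X_n$-component forces $e_i(E_j(a,b))=s_j^2\,b$, while the $X_{n-2}$-component forces $d_X(E_j(a,b))=s_j^2a-s_je_jd_Xa$. The candidate $E_j(a,b)=s_je_ja$ satisfies the second equation but gives $e_i(s_je_ja)=s_je_ie_ja$, which has no reason to equal $s_j^2\,b$ (in fact $a$ and $b$ are independent inputs). There is no local correction: you need the degree $n$ piece folded down $d$ times, once per operator, with cross-terms in every lower degree -- these interaction terms are exactly what assembles into the higher Koszul complex $\Omega(\us)$ in the paper's construction. Relatedly, your formula $\cW_n(X)=[\Gamma X]-[\Sigma^{n-2}K(s_i)\tens_R X_n]$ is not even a well-formed element of $G(T_{\us^2},n-1)$, since $K(s_i)\tens_R X_n$ carries a $T_{s_i^2}$-structure but no natural $T_{\us^2}$-structure; the correct correction term is $[\Sigma^{n-d}(X_n\tens\Omega(\us))]$, which does.

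Your explanation of the threshold $n\geq d+1$ is also off. It has nothing to do with Proposition~\ref{pr:exact-nh} or index doubling; the multivariable version of that proposition works in any length. The bound arises purely because the cone $Cf$ of the natural map $f\colon X\ra\Sigma^{n-d}(X_n\tens\Omega(\us))$ has a summand living in degrees $n-d$ through $n$, so the desuspended quotient $\Gamma X$ fits in degrees $0$ through $n-1$ only when $n-d\geq 1$. So the numerical threshold is forced by the Koszul complex's length $d$, which your single-fold proposal never introduces -- a second sign that the one-variable reduction misses the actual content of the theorem.
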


The proof of the above result closely parallels what we did for
$d=1$.  The main component is the construction of a functor
$\Gamma\colon P(T_\us,n)\ra P(T_{\us^2},n-1)$ that ``folds'' the
degree $n$ piece down into the lower degrees.  The only difference
from $d=1$ is that this functor is a bit more complicated.  To see
why, note that the maps $e_1,\ldots,e_d\colon X_{n-1}\ra X_n$ force us
to now fold $d$ different copies of $X_n$ into degree $n-2$.  The
$e_i$ maps on these folded terms will all be the old differential
$d_X$, but one quickly finds that this does not give anything close to
a $T_{\us^2}$-structure.   It turns out that in order to fix this one
needs to put certain ``interaction'' terms into the lower
degrees---these interaction terms end up forming a higher-dimensional
Koszul complex (see Remark~\ref{re:final} for a picture).  

We start by establishing some notation.  For $\us\in
R^n$ let $K(\us)$ denote the Koszul complex for $s_1,\ldots,s_d$.
This is just the quotient of $T_\us$ by the relations
$e_ie_j+e_je_i=0$ and $e_i^2=0$, $1\leq i,j\leq d$.  In particular,
note that $K(\us)$ becomes a $T_\us$-module in the evident way.

\begin{lemma}
Let $X$ be a $T_\us$-module.  There there is a chain map $K(\us)\tens_R X_0
\ra X$  that is the identity in degree $0$, and is natural in $X$.  
(Warning: This is not a
map of $T_\us$-modules).
\end{lemma}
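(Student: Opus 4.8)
The plan is to write the map down by hand on the standard basis of the Koszul complex and then verify the chain‑map identity by an induction on homological degree. Fix the standard ordering $1<2<\cdots<d$ of the indices. For a subset $I=\{i_1<i_2<\cdots<i_k\}$ of $\{1,\dots,d\}$ let $e_I$ denote the corresponding basis element of $K(\us)_k$, so that $K(\us)_k\tens_R X_0$ is the direct sum of copies of $X_0$ indexed by such $I$. I would define
\[ \phi\colon K(\us)\tens_R X_0 \ra X,\qquad \phi(e_I\tens x)=e_{i_1}e_{i_2}\cdots e_{i_k}.x\in X_k, \]
where the right‑hand side is the iterated left action on $x\in X_0$ of the generators $e_i$ of $T_\us$, applied in the indicated order (so $e_{i_k}$ acts first). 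This is visibly $R$‑linear; it reads $\phi(1\tens x)=x$ in homological degree $0$, so $\phi$ is the identity there; and it is natural in $X$ because a morphism $X\ra Y$ of $T_\us$‑modules is $e_i$‑linear for every $i$ and induces the appropriate map $X_0\ra Y_0$ in degree $0$.

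The substance is the chain‑map check, and this is where the signs live. Since $X$ vanishes in negative degrees, every $x\in X_0$ has $d_X x=0$, so the differential on $K(\us)\tens_R X_0$ is simply $d_{K(\us)}\tens\id$; applying $\phi$ and using the usual formula for the Koszul differential gives
\[ \phi\bigl(d(e_I\tens x)\bigr)=\sum_{j=1}^{k}(-1)^{j-1}\,s_{i_j}\cdot e_{i_1}\cdots\widehat{e_{i_j}}\cdots e_{i_k}.x. \]
On the other side, the defining relation $d(e_i.y)=s_i.y-e_i.(d_X y)$ of a $T_\us$‑module, applied $k$ times and combined with the centrality of $R$ in $T_\us$ and the vanishing $d_X x=0$, yields
\[ d_X\bigl(e_{i_1}e_{i_2}\cdots e_{i_k}.x\bigr)=\sum_{j=1}^{k}(-1)^{j-1}\,s_{i_j}\cdot e_{i_1}\cdots\widehat{e_{i_j}}\cdots e_{i_k}.x, \]
which is the same expression; hence $d_X\phi=\phi d$. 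I expect no genuine obstacle here: the only care needed is in bookkeeping the Koszul signs through the induction, which is routine.

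Lastly I would spell out the warning, since it is instructive and pins down why the construction is not canonical. The $T_\us$‑action on $K(\us)\tens_R X_0$ sends $e_i\cdot(e_I\tens x)$ to $\pm\,e_{\{i\}\cup I}\tens x$, or to $0$ when $i\in I$, whereas $e_i.\phi(e_I\tens x)=e_ie_{i_1}\cdots e_{i_k}.x$ need not be the corresponding ordered product, and $e_i^2.x$ need not vanish, in a general $T_\us$‑module, since $T_\us$ is the free noncommutative algebra on $e_1,\dots,e_d$. This also explains why $\phi$ depends on the chosen ordering of $\{1,\dots,d\}$; naturality in $X$, however, holds regardless.
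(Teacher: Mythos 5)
Your construction is exactly the paper's: the map $e_{i_1}\cdots e_{i_k}\tens x \mapsto e_{i_1}\cdots e_{i_k}.x$, verified to be a chain map via the Leibniz rule and $d_Xx=0$. Your write-up just spells out the sign bookkeeping that the paper leaves implicit, and the added explanation of the warning is a nice touch but matches what the paper intends.
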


\begin{proof}
For $1\leq i_1<\cdots<i_k\leq d$ and $u\in X_0$, send
$e_{i_1}\cdots e_{i_k}\tens u \mapsto e_{i_1}\cdots e_{i_k}.u$.
The Leibniz rule and the fact that $du=0$ readily show this to be
a chain map.
\end{proof}

What we actually need is the dual of the above lemma: if $X\in
\cP(T_s,n)$ then there is a natural chain map $f\colon X\ra
\Sigma^{n-d}(\Omega(\us)\tens_R X_n)$ where $\Omega(\us)$ is the dual
of the Koszul complex.  (As it happens, the Koszul complex is
self-dual---but we do not need this fact.)  This dual result follows
immediately, using Remark~\ref{re:duality}.  But
in the interest of beng concrete let us describe the chain map $f$
more precisely.  This description is unnecessary for our application,
but it seems worth including.

Let $e_1,\ldots,e_d$ be the standard basis for $R^d$, so that
$\us=\sum_j s_je_j$.  Define the complex
$\Omega(\us)$ by $\Omega(\us)_i=\Smash^{n-i}R^d$, where
the differential $d\colon \Omega(\us)_i\ra \Omega(\us)_{i-1}$ sends
$\omega\mapsto \omega\Smash \us$.  The complex $\Omega(\us)$ is the
dual of $K(\us)$, but in fact we also have 
$K(\us)\iso \Omega(\us)$.  The isomorphism is via the
Hodge $*$-operator: if $\omega=e_{i_1}\Smash \cdots\Smash e_{i_k}$
define $*\omega=e_{j_1}\Smash\cdots \Smash e_{j_{n-k}}$ to be the
unique wedge product of this form having the property that
$\omega\Smash *\omega=(-1)^{k(n-k)}e_1\Smash\cdots\Smash e_n$.  One
readily checks that the assignment $\omega\mapsto *\omega$ is a chain
map $K(\us)\ra \Omega(\us)$, and then it is clearly an isomorphism.
We use this observation only to see that $\Omega(\us)$ has a
$T_\us$-structure.  [The structure has $e_r\cdot
(e_{j_1}\Smash\cdots\Smash e_{j_t})$ equal to zero if $e_r$ does not
appear in the wedge product, and equal to $(-1)^{a+t}e_{j_1}\Smash
\cdots \widehat{e_{j_a}} \Smash\cdots \Smash e_{j_t}$ if $j_a=r$.] 

\begin{lemma}
\label{le:FH-main}
Let $X$ be a $T_\us$-module such that $X_i=0$ for $i>n$.  Then there
is a chain map $X \ra \Sigma^{n-d}(X_n\tens \Omega(\us))$ that equals
the identity in degree $n$, and this map is natural in $X$.   (Warning:
this is not a map of $T_\us$-modules).
\end{lemma}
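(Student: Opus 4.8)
The plan is to write the map down by hand. It is the degree-reversed mirror of the chain map constructed in the preceding lemma, so one could alternatively deduce it from that lemma by applying the duality functor of Remark~\ref{re:duality} to the complex $i\mapsto\Hom(X_{n-i},R)$, at least when the $X_i$ are finitely-generated projective (as they will be in our application); since the present statement assumes no projectivity I will give the direct construction. First I would unwind the definitions of $\Omega(\us)$ and the suspension to record that the degree-$i$ part of $\Sigma^{n-d}(X_n\tens\Omega(\us))$ is $X_n\tens_R\Smash^{n-i}R^d$, which is $X_n$ when $i=n$ and vanishes outside the range $n-d\le i\le n$. Then I would define $f$ in degree $i$ by
\[
  f_i(x)\;=\;\sum_{j_1<\cdots<j_{n-i}}\bigl(e_{j_1}e_{j_2}\cdots e_{j_{n-i}}.x\bigr)\tens\bigl(e_{j_1}\Smash\cdots\Smash e_{j_{n-i}}\bigr),
\]
the sum running over the $(n-i)$-element subsets of $\{1,\dots,d\}$; for $i=n$ the only term is the empty product, so $f_n=\id$, and for $i<n-d$ the sum is empty. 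Naturality in $X$ is then immediate, as the formula uses only the $T_{\us}$-action.

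The real content is checking that $f$ is a chain map, i.e.\ that $d\circ f_i=f_{i-1}\circ d_X$ for all $i$. Expanding the left-hand side using the differential $\omega\mapsto\omega\Smash\us$ on $\Omega(\us)$, the coefficient of a basis wedge $e_{j_1}\Smash\cdots\Smash e_{j_{n-i+1}}$ becomes a signed sum $\sum_a\pm\,s_{j_a}\,(e_{j_1}\cdots\widehat{e_{j_a}}\cdots e_{j_{n-i+1}}.x)$, while on the right-hand side the corresponding coefficient is $e_{j_1}\cdots e_{j_{n-i+1}}.(d_Xx)$. To match the two I would apply the identity $e_j.(d_Xy)=s_j\,y-d(e_j.y)$ --- which is $de_j=s_j$ together with the Leibniz rule --- peeling off one generator at a time, which rewrites $e_{j_1}\cdots e_{j_{n-i+1}}.(d_Xx)$ as exactly that signed sum \emph{plus} a leftover boundary term $\pm\,d\bigl(e_{j_1}\cdots e_{j_{n-i+1}}.x\bigr)$. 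The key point is that $e_{j_1}\cdots e_{j_{n-i+1}}.x$ lies in $X_{i+(n-i+1)}=X_{n+1}$, which vanishes by hypothesis, so the leftover term is $0$ and the two coefficients agree. (For $i=n$ this degenerates to the statement $s_jx=e_j.(d_Xx)$ for $x\in X_n$, again because $e_j.x\in X_{n+1}=0$.)

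I expect the main obstacle to be nothing conceptual but the Koszul sign bookkeeping: one has to match the signs coming from $\omega\mapsto\omega\Smash\us$ against those coming from peeling generators off via $e_j.(d_Xy)=s_jy-d(e_j.y)$. I would therefore fix an explicit sign convention for the suspension and for the differential on $X_n\tens\Omega(\us)$ at the start and carry it through the induction carefully. The parenthetical warning requires no argument: $f$ plainly fails to commute with the $e_r$-action already in top degree, where it is an isomorphism onto $X_n$ but the source and target carry unrelated $e_r$-actions.
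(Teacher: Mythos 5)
Your proposal is correct and takes essentially the same approach as the paper: the formula $f_i(x)=\sum_{j_1<\cdots<j_{n-i}}(e_{j_1}\cdots e_{j_{n-i}}.x)\tens(e_{j_1}\Smash\cdots\Smash e_{j_{n-i}})$ is exactly the paper's (with $k=n-i$), and your ``peeling off one generator at a time'' argument is just the inductive unwinding of the iterated Leibniz identity $0=d(e_{j_1}\cdots e_{j_{k+1}}.u)$ that the paper invokes, both resting on the same key vanishing $e_{j_1}\cdots e_{j_{k+1}}.u\in X_{n+1}=0$. The paper likewise notes, as you do, that the map could alternatively be obtained by dualizing the preceding lemma but opts for the direct construction.
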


\begin{proof}
For any $u\in X_{n-k}$ and any $j_1,\ldots,j_{k+1}\in [1,d]$ the Leibniz rule
yields 
\addtocounter{subsection}{1}
\begin{align}
\label{eq:leibniz}
 0=d(0)& =d(e_{j_1}\cdots e_{j_{k+1}}.u)\\ \notag
&=\sum_{a=1}^{k+1} (-1)^{a-1}
s_{j_a}\cdot e_{j_1}\cdots \widehat{e_{j_a}}\cdots e_{j_{k+1}}.u
+ (-1)^{k+1} e_{j_1}\cdots e_{j_{k+1}}.du.
\end{align}
Define $f\colon X_{n-k} \ra X_n\tens \Omega(\us)_k$ by the formula
\[ u\mapsto \sum_{i_1<\cdots<i_k} (e_{i_1}e_{i_2}\cdots
e_{i_k}.u)\tens (e_{i_1}\Smash\cdots\Smash e_{i_k}).
\]
The verification that this is a chain map is routine, using (\ref{eq:leibniz}).
\end{proof}

\begin{proof}[Proof of Theorem~\ref{th:general}]
Once (a) is established, the proofs of (b) and (c) are the same
as the $d=1$ case we have already done.  So the only work is in
proving (a).

Let $X$ be an object in $\cP(T_\us,n)$, and let $f\colon X \ra
\Sigma^{n-d}(X_n\tens \Omega(\us))$ be the map provided by
Lemma~\ref{le:FH-main}.  Let $Cf$ be the mapping cone of $f$, which
comes equipped with a canonical $T_{\us^2}$-module structure.  The
fact that $f$ equals the identity in the top degree yields that $Cf$
has the complex $D^{n+1}_{s^2}(X_n)$ as a subcomplex.
Indeed, one readily checks that this is a sub-$T_{\us^2}$-module.  Let
$QX$ denote the quotient, so that we have two short exact sequences of
$T_{\us^2}$-modules
\addtocounter{subsection}{1}
\begin{align}
\label{eq:final}
 & 0\ra \Sigma^{n-d}(X_n\tens \Omega(\us)) \ra Cf \ra \Sigma X \ra 0, 
\quad\text{and}\quad\\
& 0 \ra D^{n+1}_{s^2}(X_n) \ra Cf \ra QX \ra 0.\notag
\end{align}
Note that $QX$ is concentrated in degrees $1$ through $n$ (this uses
that $n\geq d+1$), and let $\Gamma X$ be the desuspension of $QX$.  

It is obvious that $\Gamma X$ is functorial in $X$, and it is also
obvious that $\Gamma(\blank)$ is additive (one only has to note that
in each degree $\Gamma X$ is a direct sum of certain homogeneous
components of $X$).  Define 
\[ \cW_n(X)=[\Gamma X] + [\Sigma^{n-d}(X_n\tens
\Omega(\us))] \in G(T_{\us^2},n).
\]
The proof that this descends to a map $G(T_\us,n) \ra G(T_\us,n-1)$ is
exactly the same as the $d=1$ case---it boils down to the very simple computation
that $\Gamma(D^n_{\us}P)\iso
\Sigma^{n-d}(K(\us)\tens_R P)$ as $T_{\us^2}$-modules.  
It is evident that $\cW i=j$.  The
identity $i\cW=j$ follows immediately from the two short exact
sequences in (\ref{eq:final}) once one realizes that the complexes all
lie in degrees $1$ through $n+1$, and therefore the sequences may be
desuspended to give exact sequences in $\cP(T_{\us}^2,n)$.  This proves (a).
\end{proof}

\begin{remark}
\label{re:final}
The complex $\Gamma X$ looks as shown below, where the
differential has both the horizontal components and the cross-term
depicted diagonally:
\[ \xymatrixcolsep{1pc}\xymatrix{
X_0 & X_1\ar[l]  & \cdots\ar[l]  
& X_{n-d-1}\ar[l]\ar@{}[d]|\oplus  & \cdots\ar[l]\ar[dl]  &
X_{n-3}\ar[l]\ar[dl] \ar@{}[d]|\oplus 
& X_{n-2}\ar[l]\ar[dl]\ar@{}[d]|\oplus 
& X_{n-1}\ar[l]\ar[dl] \\
&&& X_n\tens \Smash^d R^d & \cdots\ar[l] & X_n\tens \Smash^2 R^d\ar[l]
& 
X_n\tens \Smash^1 R^d\ar[l]
}
\]
\end{remark}

We are finally ready to complete our proof of the main result:

\begin{proof}[Proof of Theorem~\ref{th:FH2}]
This is a formal consequence of Theorem~\ref{th:general}; see the proof of
Corollary~\ref{co:maincor}.
\end{proof}

\bibliographystyle{amsalpha}

\end{document}